\newtheorem{theorem}{Theorem}[section]
\newtheorem{lemma}{Lemma}[section]
\newtheorem{remark}{Remark}[section]
\newenvironment{proof}{\noindent{\bf Proof:}}{\hfill\fbox{}\vspace*{1mm}}
\begin{document}
	
	\title{Toeplitz Operators and Berezin-type Operators on Different Bergman Spaces}

	\author{ Lvchang Li, Haichou Li}
\author{Lvchang Li \thanks
	{College of Mathematics and Informatics,
		South China Agricultural University,
		Guangzhou,
		510640,
		China
		Email: 20222115006@stu.scau.edu.cn.},\
	Haichou Li \thanks{ Corresponding author,
		College of Mathematics and Informatics,
		South China Agricultural University,
		Guangzhou,
		510640,
		China
		Email: hcl2016@scau.edu.cn.  Li is supported by NSF of China (Grant No. 12326407 and 12071155). }
}
	
	\date{}
	\maketitle
	\begin{center}
		\begin{minipage}{120mm}
			\begin{center}{\bf Abstract}\end{center}
			{In the present paper, we study the boundedness and compactness of Toeplitz operators and Berezin-type operators between different weighted Bergman spaces over tubular domains in $\mathbb{C}^n$. We establish their connection with Carleson measures and provide some characterizations.}		
			
			{\bf Key words}:\ \ Bergman space; Toeplitz operator; Berezin-type operator; Carleson measure
		\end{minipage}
	\end{center}

\section{Introduction}
\ \ \ \
Carleson measures are a powerful tool in modern complex analysis, initially introduced by Carleson \cite{carleson1958interpolation} while studying the Corona problem on the Hardy space structure of the unit disk.
Subsequently, it became the most essential tool in the study of operator theory on function spaces. Regarding the further applications of Carleson measures in operator theory on function spaces, refer to \cite{abate2011carleson,cima1982carleson,hastings1975carleson,luecking1983technique,li2024carleson}. 

It is an interesting topic to study the analytic properties of operators from one function space to another function space. In particular, it is also meaningful to study the boundedness and compactness of parametric operators from one weighted Bergman space to another weighted Bergman space. In the present paper, We study the analytic properties of two kinds of operators on the Bergman spaces over tubular domains. Specifically, we study the relationship between the boundedness and compactness of the operators and the Carleson measures. Finally, the characterization between them is given.

The main motivation of this work comes from the work of many scholars. In some simple domains, such as the unit ball, Zhao et al.\cite{pau2015carleson}studied the boundedness and compactness of Toeplitz operators on this domain, and connected it with Carleson measure, and gave an equivalent characterization. Subsequently, zhou et al.\cite{prǎjiturǎ2023berezin} improved the results of zhao et al.\cite{pau2015carleson} on this basis, and studied a class of operators similar to Toeplitz operators, later called Berezin-type operators, which evolved from Berezin transform. At the end of the article, Zhou et al.\cite{prǎjiturǎ2023berezin} further discussed the case when the index is infinite. For the Toeplitz operator theory on the unit disk and the unit ball, see \cite{zhu2005spaces,zhu2007operator}. On bounded strongly pseudoconvex domains, Abate et al.\cite{abate2019toeplitz,abate2012toeplitz,abate2011carleson} also studied similar problems. 

The previous studies are all on bounded domains, and some scholars have also carried out related research on unbounded domains of interest to the author of this paper. Si et al.\cite{si2022carleson} studied the relationship between the Toeplitz operators and the Carleson measures on the Bergman spaces on the Siegel upper half spaces. In this paper, we will study the tubular domains. Deng et al.\cite{deng2021reproducing} provided the reproducing kernels of Bergman spaces on tubular domains by a new method, and Liu et al.\cite{jiaxin2023bergman} presented fundamental properties of the Bergman metric on these domains, laying the groundwork for subsequent theoretical studies. The first author Li \cite{Li} studied the Toeplitz operators on the weighted Bergman spaces over the tubular domains, and established the relationship between the boundedness and compactness of the operators and the Carleson measures. This paper is the subsequent work of \cite{Li}. The operators studied in this paper are Berezin-type operators and Toeplitz operators with parameters. The Berezin-type operator is derived from the Berezin transform. The Berezin transform is an important tool for studying operator theory on function spaces. For details, please refer to \cite{zhu2007operator,Berezin transform}.

The organization of the paper is as follows: The second part introduces some basic terms; The third part gives some instrumental lemmas; The fourth part gives the characterization of Carleson measures and vanishing Carleson measures on the tubular domains; In the last part, we discuss Toeplitz operators and Berezin type operators. We link their boundedness and compactness with Carleson measure, and obtain the main results, including Theorem \ref{dth 1}, \ref{dth 2} and \ref{dth 3}.

\section{Preliminaries}
\ \ \ \
Let $\mathbb{C}^n$ be the $n$ dimensional complex Euclidean space. For any two points $z=\left( z_1,\cdots ,z_n \right) $ and $w=\left( w_1,\cdots ,w_n \right) $ in $\mathbb{C}^n,$ we write\[z\cdot \bar{w}:=z_1\bar{w}_1+\cdots +z_n\bar{w}_n,\] \[z^2=z\cdot z:=z_{1}^{2}+z_{2}^{2}+\cdots +z_{n}^{2}\] and \[\left| z \right|:=\sqrt{z\cdot \bar{z}}=\sqrt{\left| z_1 \right|^2+\cdots +\left| z_n \right|^2}.\]  

The set $\mathbb{B}_n=\left\{ z\in \mathbb{C}^n:\left| z \right|<1 \right\}$ will be called the unit ball of $\mathbb{C}^n$. 

The tubular domain $T_B$ of $\mathbb{C}^n$ with base $B$, is  defined as follows:

$$ T_B=\left\{ z=x+iy \in \mathbb{C}^n | x\in \mathbb{R}^n, \  y\in B \subseteq  \mathbb{R}^n  \right\}, $$   
where\[B=\left\{ \left( y',y_n \right)=( y_1,\cdots ,y_{n-1}, y_n) \in \mathbb{R}^n\left| y'^2:=y_{1}^{2}+\cdots +y_{n-1}^{2}<y_n \right. \right\}.\] 

We define the spaces $L_{\alpha}^{p}\left( T_B \right)$, which is composed of all Lebesgue measurable functions $f$ on $T_B$, and its norm \[\lVert f \rVert _{p,\alpha}=\left\{ \int_{T_B}{\left| f\left( z \right) \right|^p dV_\alpha\left( z \right)} \right\} ^{\frac{1}{p}}\] is finite, where $dV_\alpha(z)=( y_n-\left| y' \right|^2)^\alpha dV(z)$, $\alpha>-1$, $dV(z)$ denotes the Lebesgue measure on $\mathbb{C}^n$. 

The space $L^\infty(T_B)$ consists of all bounded Lebesgue measurable functions $f$ on $T_B$ such that $$\lVert f \rVert _{\infty}=\mathop{\text{ess\ sup}}_{z\in T_B}\left| f\left( z \right) \right|$$ is finite.

The Bergman spaces $A_{\alpha}^{p}\left( T_B \right)$ on tube $T_B$  is a set composed of all holomorphic functions in $L_{\alpha}^{p}\left( T_B \right)$. We know that when $1\le p<\infty $ the space $A_{\alpha}^{p}\left( T_B \right)$ is a Banach space with the norm $\lVert \cdot \rVert _{p,\alpha}.$ In particular, when $p=2,$ $A_{\alpha}^{2}\left( T_B \right)$ is a Hilbert space. 

 An very important orthogonal projection from $L_{\alpha}^{2}\left( T_B \right)$ to $A_{\alpha}^{2}\left( T_B \right)$ is the following integral operator:\[P_{\alpha}f\left( z \right) =\int_{T_B}{K_\alpha\left( z,w \right) f\left( w \right) dV_\alpha\left( w \right)},\] with the Bergman kernel  \[K_{\alpha}\left( z,w \right) =\frac{2^{n+1+2\alpha}\varGamma \left( n+1+\alpha \right)}{\varGamma \left( \alpha +1 \right) \pi ^n}\left( \left( z'-\overline{w'} \right) ^2-2i\left( z_n-\overline{w_n} \right) \right) ^{-n-\alpha -1}.\]

For convenience, we introduce the following notations: \[\rho \left( z,w \right) =\frac{1}{4}\left( \left( z'-\overline{w'} \right) ^2-2i\left( z_n-\overline{w_n} \right) \right)\] and let $\rho \left( z \right) :=\rho \left( z,z \right) =y_n-y'^2.$ 

With the above notion $\rho \left( z,w \right)$, the weighted Bergman kernel of $T_B$ becomes
\[K_{\alpha}\left( z,w \right) =\frac{\varGamma \left( n+\alpha +1 \right)}{2^{n+1}\pi ^n\varGamma \left( \alpha +1 \right) \rho \left( z,w \right) ^{n+\alpha +1}}.\]

We state some explanations regarding the boundary of tubes $T_B$.

Recall $\rho \left( z \right) =y_n-y'^2$ and let $\partial T_B:=\left\{ z\in \mathbb{C}^n\,\,: \rho \left( z \right) =0 \right\}$ denote the boundary of $T_B.$ Then $$\widehat{T_B}:=T_B\cup \partial T_B\cup \left\{ \infty \right\}$$ is the one-point compactification of $T_B.$

Also, let $\partial \widehat{T_B}:=\partial T_B\cup \left\{ \infty \right\} .$ Thus, $z\rightarrow \partial \widehat{T_B}$ 
means $\rho \left( z \right) \rightarrow 0$ or $\left| z \right|\rightarrow \infty .$

We review the Bergman metric ball on domains in $\mathbb{C}^n$. 

We will use the important  transform $\varPhi :\mathbb{B}_{n}\rightarrow T_{B}$ given by
$$\varPhi(z)=\left ( \frac{\sqrt{2}z'}{1+z_{n}}, \ i\frac{1-z_n}{1+z_n}-i\frac{z' \cdot z'}{(1+z_n)^2} \right ), \ z\in \mathbb{B}_n$$
and it is not hard to calculate that 
$$\varPhi^{-1}(w)=\left ( \frac{2iw'}{i+w_{n}+\frac{i}{2}w'\cdot w'},\  \frac{i-w_n-\frac{i}{2}w'\cdot w'}{i+w_{n}+\frac{i}{2}w'\cdot w'} \right ), \ w\in T_{B}.$$
The mapping $\varPhi$ is a biholomorphic map from $\mathbb{B}_n$ to $T_B$ and a key tool for this paper.

In Krantz's book, there is the following proposition \cite[proposition 1.4.12]{krantz2001function}:

Let $\varOmega _1,\ \varOmega _2\subseteq \mathbb{C}^n$ be domains and $ f:\varOmega _1\rightarrow \varOmega _2$ a biholomorphic mapping. Then $f$ induces an isometry of Bergman metrics:$$\beta _{\varOmega _1}\left( z,w \right) =\beta _{\varOmega _2}\left( f\left( z \right) ,f\left( w \right) \right) $$ for all $z,w\in \varOmega _1$.

Hence, taking $\varOmega _1=T_B$ and $\varOmega _2=\mathbb{B}_n$, we have:$$
\beta _{T_B}\left( z,w \right) =\beta _{\mathbb{B}_n}\left( \Phi ^{-1}\left( z \right) ,\Phi ^{-1}\left( w \right) \right) =\tanh ^{-1}\left( \left| \varphi _{\Phi ^{-1}\left( z \right)}\left( \Phi ^{-1}\left( w \right) \right) \right| \right) .
$$
A computation shows that
$$
\beta_{T_B}(z, w)=\tanh ^{-1} \sqrt{1-\frac{\rho(z) \rho(w)}{|\rho(z, w)|^2}} .
$$
Let $D\left( z,r \right) $ denote the Bergman metric ball at $z$ with radius $r$, that is 
\[D(z,r)=\left\{w \in T_B:  \beta(z,w)<r\right\}.\]

Let $\mathcal{M}_+$ be the set of all positive Borel measure $\mu$ such that \[\int_{T_B}{\frac{d\mu \left( z \right)}{\left| \rho \left( z,\mathbf{i} \right) \right|^t}}<\infty \] for some $t>0$, where $\mathbf{i}=(0',i).$

Next, we introduce the two operators studied in the present paper.

For $s,t>0$, we denote $$
B_{s,t}\left( \mu \right) \left( z \right) :=\rho \left( z \right) ^t\int_{T_B}{\frac{1}{|\rho \left( z,w \right) |^{\left( n+1+\alpha  \right) s+t}}}d\mu \left( w \right) .
$$
This is called a Berezin-type transform for the measure $\mu$, where $\alpha > -1$.
Notice that $B_{1,n+\alpha+1}$  is the canonical Berezin transform of $\mu$ except a constant.

Let $\xi  >-1$ and $ \mu \in \mathcal{M}_+ $  a positive Borel measure on $T_B$. For $f\in H(T_B)$, where $H(T_B)$ is the space consisting of all holomorphic functions on $T_B$, we consider the following sublinear operator:  \[ B_{\mu}^{\xi}f\left( z \right) =\int_{T_B}{\frac{|f\left( w \right) |}{\left| \rho \left( z,w \right) \right|^{n+1+\xi}}}\text{d}\mu \left( w \right), \  z\in T_B.\]
We refer to $ B_{\mu}^{\xi}$ as a Berezin-type operator.

The Toeplitz operator $T_{\mu}^{\xi}$ is defined by:  \[T_{\mu}^{\xi}f\left( z \right) =\int_{T_B}{\frac{f\left( w \right)}{\rho \left( z,w \right) ^{n+1+\xi}}}\text{d}\mu \left( w \right), \  z\in T_B.\]
It is clear that $\left| T_{\mu}^{\xi}f \right|\le B_{\mu}^{\xi}f$ for $f \in H(T_B)$. Therefore, The boundedness of Berezin-type operators implies the boundedness of Toeplitz operators.

For any positive numbers $\lambda > 0$ and $\alpha > -1$, a positive Borel measure $\mu$ is considered a $(\lambda,\alpha)$-Carleson measure, if for all positive integers $p$ and $q$ where $q/p = \lambda$, there exists a positive constant $C > 0$ such that
$$
\int_{T_B} \left| f(z) \right|^q \, d\mu(z) \leq C \lVert f \rVert_{p,\alpha}^q
$$
for every $f \in A_\alpha^{p}(T_B)$.

A positive Borel measure $\mu$ as a vanishing $(\lambda,\alpha)$-Carleson measure, if for any positive numbers $p$ and $q$ such that $q/p=\lambda$, and for any bounded sequence ${f_k}$ in $A_\alpha^{p}(T_B)$ that converges uniformly to $0$ on every compact subset of $T_B$, we have \[\lim_{k\rightarrow \infty} \int_{T_B}{\left| f_k \right|^q d\mu}=0.\]
We also denote by 
$$
\lVert \mu \rVert _{\lambda ,\alpha}=\mathop{\text{sup}}_{f\in A_{\alpha}^{p}\left( T_B \right) ,\lVert f \rVert _{p,\alpha}\le 1}\int_{T_B}{\left| f\left( z \right) \right|^qd\mu \left( z \right)}.
$$

Finally, we recall the definition of a compact operator: Given two Banach spaces X and Y, a bounded linear operator is called compact if it maps any bounded set in X to a set that is relatively compact in the topology of Y. When the Banach space X is reflexive, a compact operator on X is equivalent to being completely continuous. Complete continuity refers to an operator that maps weakly convergent sequences to sequences that converge in norm.
For sublinear operators, the boundedness and compactness of operators are also defined according to the boundedness and compactness of linear operators

Throughout the paper we use C to denote positive constants whose value may change from line to line but does not depend on the functions being considered. The notation $A\lesssim B$ means that there is a positive constant C such that $A\le CB$, and the notation $A\simeq B$ means that $A\lesssim B$ and $B\lesssim A$.

\section{Main lemmas}
\ \ \ \
To prove our main results, we need the following key lemmas, where Lemmas \ref{lem duliang}-\ref{lem jifendengshi} are from \cite{jiaxin2023bergman}. They play a crucial role as instrumental lemmas in the present paper.

The following lemma \ref{lem duliang} is the so-called Bergman decomposition on tubular domains. It plays an important role in the topic of operator theory on function spaces.
\begin{lemma}\label{lem duliang}
	There exists a positive integer $N$ such that for any $0<r\le1$ we can find a sequence $\{a_k\}$ in $T_B$ with the following properites:
	
	$(1)$ $T_B=\bigcup_{k=1}^{\infty}{D\left( a_k,r \right)};$
		
	$(2)$ The sets $D(a_k,r/4)$ are mutually disjoint;

	$(3)$ Each point $z\in T_B$ belongs to at most $N$ of the sets $D(a_k,2r)$.

\end{lemma}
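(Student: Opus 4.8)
The plan is to take $\{a_k\}$ to be a \emph{maximal $(r/2)$-separated sequence} in the Bergman metric and then read off the three properties. Since $\varPhi$ is a biholomorphism and the Bergman distance is a biholomorphic invariant, $\beta=\beta_{T_B}$ is a genuine distance on $T_B$ (in particular it obeys the triangle inequality, being the pushforward of $\beta_{\mathbb{B}_n}$), and $(T_B,\beta)$ is isometric to $(\mathbb{B}_n,\beta_{\mathbb{B}_n})$, hence separable and of infinite diameter. By a routine maximality argument (Zorn's lemma) there is a set $\{a_k\}\subseteq T_B$, necessarily infinite, with $\beta(a_j,a_k)\ge r/2$ for all $j\ne k$ and maximal with respect to this property.

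Properties (1) and (2) are then immediate. For (2): if $w\in D(a_j,r/4)\cap D(a_k,r/4)$ with $j\ne k$, then $\beta(a_j,a_k)\le\beta(a_j,w)+\beta(w,a_k)<r/2$, contradicting the separation, so the balls $D(a_k,r/4)$ are pairwise disjoint. For (1): given $z\in T_B$, maximality forces $\beta(z,a_k)<r/2$ for some $k$ (otherwise $\{a_k\}\cup\{z\}$ would still be $(r/2)$-separated), so $z\in D(a_k,r/2)\subseteq D(a_k,r)$, and thus $T_B=\bigcup_k D(a_k,r)$.

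The substantive point is (3). Fix $z\in T_B$ and put $I=\{k: z\in D(a_k,2r)\}$. For $k\in I$ and $w\in D(a_k,r/4)$ we have $\beta(z,w)<2r+r/4<3r$, so the pairwise disjoint balls $\{D(a_k,r/4)\}_{k\in I}$ all lie in $D(z,3r)$; writing $V_\alpha(E):=\int_E dV_\alpha$ and comparing masses gives $\sum_{k\in I}V_\alpha(D(a_k,r/4))\le V_\alpha(D(z,3r))$. Now I would invoke the standard geometric estimates on $T_B$ recorded in \cite{jiaxin2023bergman}: first $\rho(a_k)\simeq\rho(z)$ for $k\in I$ (since $a_k$ lies in a Bergman ball of bounded radius about $z$), and second $V_\alpha(D(\zeta,s))\simeq\rho(\zeta)^{n+1+\alpha}$ for $s$ in a bounded range. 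Combining these, each term $V_\alpha(D(a_k,r/4))$ is $\gtrsim\rho(z)^{n+1+\alpha}$ while $V_\alpha(D(z,3r))\lesssim\rho(z)^{n+1+\alpha}$, so $\#I\lesssim 1$; taking $N$ to be this bound proves (3).

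The main obstacle is making $N$ independent of $r\in(0,1]$, i.e.\ controlling the implicit constants above \emph{uniformly} in $r$ rather than for each fixed $r$. This requires more than a soft compactness argument: transferring to $\mathbb{B}_n$ via $\varPhi$, where Bergman balls are explicit ellipsoids, one checks that the comparison constants in $V_\alpha(D(\zeta,s))\simeq\rho(\zeta)^{n+1+\alpha}$ and in $\rho(a_k)\simeq\rho(z)$ degenerate only through the radii $r/4,2r,3r$, which remain bounded by $3$ when $r\le 1$, together with $\tanh s\simeq s$ on bounded intervals; the resulting bound on $\#I$ then depends only on $n$ and $\alpha$. If one only needs $N=N(r)$, the argument of the previous paragraph already suffices.
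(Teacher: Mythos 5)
Your proposal is correct, and in fact the paper offers no proof of this lemma to compare against: it is imported verbatim from the reference of Liu--Deng--Yin on the Bergman metric of $T_B$, so your argument stands on its own. What you give is the standard proof (the tubular-domain analogue of Theorem 2.23 in Zhu's book on the unit ball): a maximal $(r/2)$-separated set yields (1) by maximality and (2) by the triangle inequality, and (3) follows by packing the disjoint balls $D(a_k,r/4)$, $k\in I$, into $D(z,3r)$ and comparing volumes. You are also right to single out the only delicate point, namely that the constants in the paper's Lemma 3.3 ($V_\alpha(D(\zeta,s))\simeq\rho(\zeta)^{n+1+\alpha}$) necessarily depend on $s$, so that a naive application would only give $N=N(r)$; the fix you indicate — transporting the count to $\mathbb{B}_n$ via the isometry induced by $\varPhi$, where $v\left(D(\zeta,s)\right)\simeq(\tanh s)^{2n}(1-|\zeta|^2)^{n+1}$ with constants uniform for $s\le 3$, so that the ratio of volumes is controlled by $\left(\tanh(3r)/\tanh(r/4)\right)^{2n}\lesssim 12^{2n}$ — is exactly what makes $N$ depend only on $n$. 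Two cosmetic remarks: for (3) you may as well count with the unweighted measure (the statement does not involve $\alpha$), and countability of the maximal separated family (needed to call it a sequence) follows from separability together with the disjointness in (2).
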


Any sequence $\{a_k\}$ satisfying the conditions of the above lemma is called a lattice.
\\

The following lemmas \ref{lem dengjia} and \ref{lem ceByuanpan} give the control of the size of $\left| \rho \left( z,w \right) \right|$ in the Bergman ball $D\left( z,r \right)$ and the size of the Bergman ball $D\left( z,r \right)$ under the Euclidean metric, respectively.
\begin{lemma}\label{lem dengjia}
	For any $r>0$, the inequalities 
$$\left| \rho \left( z,u \right) \right|\simeq \left| \rho \left( z,v \right) \right|$$
	hold for all $z,u,v\in T_{B} $ with $\beta (u,v) <r .$
\end{lemma}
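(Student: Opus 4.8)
The plan is to transport the estimate to the unit ball $\mathbb{B}_n$ through the biholomorphism $\Phi$, where the analogous statement is classical; a direct argument on $T_B$ is also possible (first deducing $\rho(u)\simeq\rho(v)$ from the displayed formula for $\beta_{T_B}$, then invoking a quasi-triangle inequality for $|\rho(\cdot,\cdot)|^{1/2}$), but the transfer is shorter. The one computation needed is the transformation identity
\[
\rho\bigl(\Phi(a),\Phi(b)\bigr)=\frac{1-a\cdot\bar b}{(1+a_n)(1+\bar b_n)},\qquad a,b\in\mathbb{B}_n,
\]
which I would verify by substituting the explicit expression for $\Phi$ from Section~2 and simplifying the quadratic terms in $z'\cdot z'$ (for $b=a$ it specializes to $\rho(\Phi(a))=(1-|a|^2)/|1+a_n|^2$); alternatively it can be read off from the change-of-variables rule for the weighted Bergman kernels under $\Phi$ together with the known forms of $K_\alpha$ on $\mathbb{B}_n$ and on $T_B$. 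Verifying this identity is the main, though routine, obstacle.

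Given $z,u,v\in T_B$ with $\beta_{T_B}(u,v)<r$, set $a=\Phi^{-1}(z)$, $\widetilde u=\Phi^{-1}(u)$, $\widetilde v=\Phi^{-1}(v)\in\mathbb{B}_n$. By the isometry of the Bergman metrics under $\Phi$ (Krantz's proposition quoted above), $\beta_{\mathbb{B}_n}(\widetilde u,\widetilde v)=\beta_{T_B}(u,v)<r$. Applying the transformation identity twice and cancelling the common factor $|1+a_n|$,
\[
\frac{|\rho(z,u)|}{|\rho(z,v)|}=\frac{|1-a\cdot\overline{\widetilde u}|}{|1-a\cdot\overline{\widetilde v}|}\cdot\frac{|1+\widetilde v_n|}{|1+\widetilde u_n|},
\]
so it suffices to bound the two factors on the right, above and below, by a constant depending only on $n$ and $r$.

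For this I would invoke the classical estimate on the unit ball (see, e.g., \cite{zhu2005spaces}): for each $r>0$ there is $C=C(n,r)\ge 1$ with $C^{-1}\le |1-w\cdot\overline{\widetilde u}|/|1-w\cdot\overline{\widetilde v}|\le C$ for every $w\in\mathbb{B}_n$ whenever $\beta_{\mathbb{B}_n}(\widetilde u,\widetilde v)<r$. Since $|1-w\cdot\bar\zeta|\ge 1-|\zeta|>0$ for $w\in\overline{\mathbb{B}_n}$ and $\zeta\in\mathbb{B}_n$, this quotient is continuous in $w$ on the compact set $\overline{\mathbb{B}_n}$, so the same two-sided bound persists for all $w\in\overline{\mathbb{B}_n}$. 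Taking $w=a$ controls the first factor. For the second, write $e_n=(0',1)$ and note $(-e_n)\cdot\overline{\widetilde u}=-\overline{\widetilde u_n}$, so that $|1+\widetilde u_n|=|1+\overline{\widetilde u_n}|=|1-(-e_n)\cdot\overline{\widetilde u}|$ with $-e_n\in\overline{\mathbb{B}_n}$; the estimate with $w=-e_n$ then gives $|1+\widetilde u_n|\simeq|1+\widetilde v_n|$. Multiplying the two bounds yields $|\rho(z,u)|\simeq|\rho(z,v)|$ with a constant depending only on $n$ and $r$, which is the assertion; the argument is symmetric in $u$ and $v$, so the bound is two-sided as stated.
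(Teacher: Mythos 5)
Your argument is correct, and it is worth noting that the paper itself does not prove this lemma at all: it is quoted from \cite{jiaxin2023bergman} as one of the imported tools (Lemmas 3.1--3.4). So your proposal supplies a proof where the paper offers only a citation. The transfer strategy is sound, and the one computation you flag as the main obstacle does check out: substituting $\varPhi$ into $\rho(z,w)=\frac14\bigl((z'-\overline{w'})^2-2i(z_n-\overline{w_n})\bigr)$, the $a'\cdot a'$ and $\overline{b'}\cdot\overline{b'}$ terms coming from $(z'-\overline{w'})^2$ cancel exactly against those coming from $-2i(z_n-\overline{w_n})$, leaving $\rho(\varPhi(a),\varPhi(b))=(1-a\cdot\bar b)/\bigl((1+a_n)(1+\overline{b_n})\bigr)$ as you claim. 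The remaining ingredients are all legitimate: the Bergman-metric isometry under $\varPhi$ is quoted in Section 2 of the paper, the two-sided bound on $|1-w\cdot\overline{\widetilde u}|/|1-w\cdot\overline{\widetilde v}|$ for $\beta_{\mathbb{B}_n}(\widetilde u,\widetilde v)<r$ is Lemma 2.20 of \cite{zhu2005spaces} and is stated there for $w$ in the closed ball (so your continuity/density extension, while valid, is not even needed), and the trick of evaluating that estimate at the boundary point $w=-e_n$ to control $|1+\widetilde u_n|/|1+\widetilde v_n|$ is a clean way to handle the extra denominator factors that the tube geometry introduces. The constants depend only on $n$ and $r$, as required. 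This is a complete and self-contained derivation of the lemma from the classical unit-ball estimate; the only cosmetic improvement would be to cite Zhu's Lemma 2.20 explicitly rather than describing it as ``the classical estimate.''
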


\begin{lemma}\label{lem ceByuanpan}
	For any $z\in T_B$ and $r>0$ we have \[V_{\alpha}\left( D\left( z,r \right) \right) \simeq \rho \left( z \right) ^{n+\alpha +1}.\]
\end{lemma}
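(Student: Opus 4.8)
\medskip
\noindent\textbf{Proof idea.}
The plan is to transport the estimate to the unit ball $\mathbb{B}_n$ through the biholomorphism $\varPhi$, where the analogous statement for Bergman balls is classical, and to use Lemma \ref{lem dengjia} to see that the weight $\rho$ and the Jacobian factor of $\varPhi$ are essentially constant on each Bergman ball (throughout, every ``$\simeq$'' carries a constant depending only on $r$, $n$ and $\alpha$). First one checks the elementary identity $\rho(w,z)=\overline{\rho(z,w)}$, which follows directly from the definition of $\rho(\cdot,\cdot)$, so that $|\rho(w,z)|=|\rho(z,w)|$; recall also that $\rho(w)=\rho(w,w)>0$ on $T_B$. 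Applying Lemma \ref{lem dengjia} twice to the pair $\{w,z\}$ --- once with base point $w$, once with base point $z$ --- gives, for every $w\in D(z,r)$,
$$\rho(w)=|\rho(w,w)|\simeq|\rho(w,z)|=|\rho(z,w)|\simeq|\rho(z,z)|=\rho(z).$$

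Next, set $\zeta=\varPhi^{-1}(z)$ and let $\widetilde{D}(\zeta,r)$ denote the Bergman metric ball of $\mathbb{B}_n$ centered at $\zeta$ with radius $r$. Since $\beta$ is a biholomorphic invariant we have $D(z,r)=\varPhi\big(\widetilde{D}(\zeta,r)\big)$, and the holomorphic change of variables $w=\varPhi(u)$ turns the weighted volume into
$$V_\alpha\big(D(z,r)\big)=\int_{\widetilde{D}(\zeta,r)}\rho\big(\varPhi(u)\big)^{\alpha}\,\big|J_{\varPhi}(u)\big|^{2}\,dV(u),$$
where $J_{\varPhi}$ is the complex Jacobian determinant of $\varPhi$. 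The transformation rule $K^{\mathbb{B}_n}(u,u)=\big|J_{\varPhi}(u)\big|^{2}K_{0}\big(\varPhi(u),\varPhi(u)\big)$ for the (unweighted) Bergman kernels, combined with the $\alpha=0$ case of the kernel $K_\alpha$ recorded in the preliminaries and with the well-known formula $K^{\mathbb{B}_n}(u,u)\simeq(1-|u|^2)^{-n-1}$, yields the pointwise identity
$$\big|J_{\varPhi}(u)\big|^{2}\simeq\frac{\rho\big(\varPhi(u)\big)^{\,n+1}}{(1-|u|^2)^{\,n+1}}.$$

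To conclude, note that for $u\in\widetilde{D}(\zeta,r)$ one has $1-|u|^2\simeq 1-|\zeta|^2$ (the standard slow-oscillation estimate on $\mathbb{B}_n$), while $\varPhi(u)\in D(z,r)$ together with the first display gives $\rho\big(\varPhi(u)\big)\simeq\rho(z)$; hence the integrand above is $\simeq\rho(z)^{\,n+\alpha+1}(1-|\zeta|^2)^{-(n+1)}$ on $\widetilde{D}(\zeta,r)$, and therefore
$$V_\alpha\big(D(z,r)\big)\simeq\frac{\rho(z)^{\,n+\alpha+1}}{(1-|\zeta|^2)^{\,n+1}}\,V\big(\widetilde{D}(\zeta,r)\big)\simeq\rho(z)^{\,n+\alpha+1},$$
where the last step uses the classical fact $V\big(\widetilde{D}(\zeta,r)\big)\simeq(1-|\zeta|^2)^{\,n+1}$.

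The point needing the most care is the uniformity of all the implied constants in $z$: each ``$\simeq$'' must hold with a constant depending only on $r$ (and $n,\alpha$), never on $z$. This is precisely why the argument is routed through $\mathbb{B}_n$: after the pull-back everything is expressed through $1-|u|^2$, whose oscillation on Bergman balls and whose balls' Euclidean volumes are controlled by $\zeta$-independent constants via the M\"obius maps $\varphi_a$ of $\mathbb{B}_n$, while the only ingredient living on $T_B$, Lemma \ref{lem dengjia}, is already uniform by its statement. An alternative that avoids $\varPhi$ is to exploit the transitivity of the automorphism group of $T_B$ to reduce the whole computation to the single number $V_\alpha\big(D(\mathbf{i},r)\big)$; but bookkeeping the way $\rho$ transforms under a general automorphism is clumsier than the kernel identity used above, so I would keep the $\varPhi$-transfer as the main line of proof.
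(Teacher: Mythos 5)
Your argument is correct, and all the implied constants are indeed uniform in $z$ for the reasons you give. Note that the paper does not actually prove this lemma: it is one of the statements (Lemmas \ref{lem duliang}--\ref{lem jifendengshi}) imported wholesale from the reference on the Bergman metric of $T_B$, so there is no in-text proof to compare against. Your derivation is a legitimate self-contained substitute. The three ingredients all check out: (i) the symmetry $\rho(w,z)=\overline{\rho(z,w)}$ and the double application of Lemma \ref{lem dengjia} correctly give $\rho(w)\simeq\rho(z)$ on $D(z,r)$ with a constant depending only on $r$; (ii) the diagonal Bergman-kernel transformation rule combined with the explicit formula for $K_0$ on $T_B$ gives the Jacobian identity $|J_{\varPhi}(u)|^2=2^{n+1}\rho(\varPhi(u))^{n+1}(1-|u|^2)^{-(n+1)}$ exactly (you only need it up to constants, but it is in fact an identity); and (iii) the facts $1-|u|^2\simeq 1-|\zeta|^2$ on $\widetilde{D}(\zeta,r)$ and $V(\widetilde{D}(\zeta,r))\simeq(1-|\zeta|^2)^{n+1}$ are the standard unit-ball estimates. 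One stylistic remark: once you have (i), you could shortcut the transfer entirely by writing $V_\alpha(D(z,r))\simeq\rho(z)^\alpha V(D(z,r))$ and then proving only the unweighted case $V(D(z,r))\simeq\rho(z)^{n+1}$ via the Jacobian identity with $\alpha=0$; this is the same computation but separates the weight from the geometry. Either way, the proof stands.
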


The following lemma \ref{lem jifendengshi} plays a key role in the integral operation in this paper.
\begin{lemma}\label{lem jifendengshi}
	Let $r, s>0, t>-1$ and $r+s-t>n+1$, then 
	\[  \int_{T_{B}}\frac{{\rho (w)}^{t}}{{\rho (z,w)}^{r}{\rho (w,u)}^{s}}dV(w)=\frac{C_{1}(n,r,s,t)}{\rho (z,u)^{r+s-t-n-1}}        \]
	for all $z,u \in T_{B}$, where
	$$C_{1}(n,r,s,t)=\frac{2^{n+1}{\pi}^{n}\Gamma (1+t)\Gamma (r+s-t-n-1)}{\Gamma (r)\Gamma (s)}.$$
	In particular, let $s,t\in \mathbb{R}$, if $t>-1,s-t>n+1$, then \[\int_{T_B}{\frac{\rho \left( w \right) ^t}{\left| \rho \left( z,w \right) \right|^s}}dV\left( w \right) =\frac{C_1\left( n,s,t \right)}{\rho \left( z \right) ^{s-t-n-1}}.\] Otherwise, the above equation is infinity.
\end{lemma}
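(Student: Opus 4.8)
The plan is to evaluate the integral directly by Fubini, integrating out the real part of $w$ first and its imaginary part second, so that everything collapses to a short chain of Beta integrals. Write $w=a+ib$ with $a\in\mathbb{R}^{n}$ and $b=(b',b_{n})\in B$; then $dV(w)=da\,db$ and $\rho(w)=b_{n}-|b'|^{2}$. As a function of $a_{n}=\mathrm{Re}\,w_{n}$, both $\rho(z,w)$ and $\rho(w,u)$ are affine, with slopes $\tfrac{i}{2}$ and $-\tfrac{i}{2}$, and on $T_{B}\times T_{B}$ one has $\mathrm{Re}\,\rho(z,w)>0$ (indeed $4\,\mathrm{Re}\,\rho(z,w)=|x'-a'|^{2}-|y'+b'|^{2}+2(y_{n}+b_{n})>0$ because $y,b\in B$ force $2(y_{n}+b_{n})\ge|y'+b'|^{2}$), so all non-integer powers stay on the principal branch. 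Hence the innermost integral $\int_{\mathbb{R}}\rho(z,w)^{-r}\rho(w,u)^{-s}\,da_{n}$ is the classical Beta integral $\int_{\mathbb{R}}(x-i\alpha)^{-r}(x+i\beta)^{-s}\,dx$, which for $\alpha,\beta$ with positive real part and $r+s>1$ equals a constant times $(\alpha+\beta)^{1-r-s}$, the constant being $\Gamma(r+s-1)/(\Gamma(r)\Gamma(s))$ up to a power of $2$ and a $\pi$ (and a phase that will cancel in the end); here $\alpha+\beta$ is, up to a factor, the part of $\rho(z,w)$ and $\rho(w,u)$ not depending on $a_{n}$.

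Next I would integrate over $a'\in\mathbb{R}^{n-1}$. After the first step the integrand is a negative power of $|a'|^{2}+R$ with $R$ having positive real part and independent of $a'$, so polar coordinates give $\int_{\mathbb{R}^{n-1}}(|a'|^{2}+R)^{-p}\,da'=\tfrac{\omega_{n-2}}{2}B\!\big(\tfrac{n-1}{2},p-\tfrac{n-1}{2}\big)R^{\frac{n-1}{2}-p}$, where $\omega_{n-2}=2\pi^{(n-1)/2}/\Gamma(\tfrac{n-1}{2})$. Then, using a real translation (an automorphism of $T_{B}$ fixing $dV$ and $\rho$) to normalise $\mathrm{Re}\,u=0$, together with the substitution $c_{n}:=b_{n}-|b'|^{2}\in(0,\infty)$ and a shift in $b'$, the quantity $R$ becomes $2\rho(z,u)+2c_{n}+|b'|^{2}$ (in the special case $u=z$ this is $2\rho(z)+2c_{n}+|b'|^{2}$). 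Integrating in $b'$ by polar coordinates again leaves a negative power of $c_{n}+\rho(z,u)$, and the final $c_{n}$-integral is
\[
\int_{0}^{\infty}c_{n}^{\,t}\,(c_{n}+\rho(z,u))^{-m}\,dc_{n}=\rho(z,u)^{\,t+1-m}\,B(t+1,\,m-t-1),
\]
which converges exactly when $t>-1$ (behaviour at $c_{n}=0$) and $m-t-1>0$ (behaviour at $c_{n}=\infty$). Tracking the exponents gives $m-t-1=r+s-t-n-1$, so the hypotheses $t>-1$ and $r+s-t>n+1$ are precisely what the computation demands, while the auxiliary conditions met along the way ($r+s>1$, $2p>n-1$, and so on) all follow from them; and the surviving power of $\rho(z,u)$ is $\rho(z,u)^{-(r+s-t-n-1)}$, as asserted.

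It then remains to collect the constant. Multiplying the factors from the three stages produces a product of values of $\Gamma$ and of $B$ together with powers of $2$ and $\pi$; substituting $\omega_{n-2}=2\pi^{(n-1)/2}/\Gamma(\tfrac{n-1}{2})$ and applying Legendre's duplication formula, the half-integer Gamma factors cancel and the product telescopes to $2^{n+1}\pi^{n}\Gamma(1+t)\Gamma(r+s-t-n-1)/(\Gamma(r)\Gamma(s))=C_{1}(n,r,s,t)$. The ``otherwise the above equation is infinity'' clause is read off the same computation: if $t\le-1$ the $c_{n}$-integral diverges at $0$, and if $r+s-t\le n+1$ it diverges at $\infty$. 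The unweighted ``in particular'' identity is the case $u=z$ after writing $|\rho(z,w)|^{s}=\rho(z,w)^{s/2}\rho(w,z)^{s/2}$ (equivalently, one runs the same three steps with the innermost integral replaced by $\int_{\mathbb{R}}(1+v^{2})^{-s/2}\,dv$), with convergence governed by $t>-1$ and $s-t>n+1$.

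I expect the main obstacle to be this last paragraph: carrying the \emph{exact} constant, rather than a mere proportionality, through three nested Beta integrals and recognising the duplication-formula cancellation that yields $C_{1}(n,r,s,t)$. A secondary point needing care is the first step, where one must fix the branches of $\rho(z,w)^{r}$ and $\rho(w,u)^{s}$ (legitimate since $\mathrm{Re}\,\rho>0$ on $T_{B}\times T_{B}$) and justify the interchange of integrations, which is permissible because under the stated hypotheses the triple integral converges absolutely. As a cross-check for the special case $r=n+1+t$ one can instead recognise the left side as a constant times $P_{t}\big[\rho(\cdot,u)^{-s}\big](z)$, the weighted Bergman projection applied to the holomorphic function $\rho(\cdot,u)^{-s}$, which lies in $A^{1}_{t}(T_{B})$ when $s-t>n+1$ and is therefore reproduced; extending this to general $r$, however, still requires the direct computation above.
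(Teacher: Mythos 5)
The paper does not prove this lemma at all: it is imported verbatim from the reference of Liu--Deng--Yin (\cite{jiaxin2023bergman}), so there is no in-paper argument to compare against. Your direct computation --- Fubini over $\mathrm{Re}\,w_n$, then $\mathrm{Re}\,w'$, then the imaginary variables after the substitution $c_n=b_n-|b'|^2$, reducing everything to the Beta integrals $\int_{\mathbb{R}}(A+ix)^{-r}(B-ix)^{-s}\,dx=\frac{2\pi\Gamma(r+s-1)}{\Gamma(r)\Gamma(s)}(A+B)^{1-r-s}$, $\int_{\mathbb{R}^{n-1}}(|a'|^2+R)^{-p}\,da'$, and $\int_0^\infty c_n^t(c_n+\rho(z,u))^{-m}\,dc_n$ --- is exactly the standard Forelli--Rudin-type proof of such identities, and your bookkeeping of where $t>-1$ and $r+s-t>n+1$ enter (and why the intermediate conditions $r+s>1$, $p>\tfrac{n-1}{2}$ follow from them) is right, as is the reduction of the ``in particular'' case via $|\rho(z,w)|^s=\rho(z,w)^{s/2}\rho(w,z)^{s/2}$ with $\overline{\rho(z,w)}=\rho(w,z)$. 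The approach is sound; what a referee would still ask you to write out is threefold. First, the appeal to absolute convergence to justify Fubini is not free: the crude bound $|\rho(z,w)|^{-r}\le(\mathrm{Re}\,\rho(z,w))^{-r}$ does not decay in $a_n$, so you must run the same three-stage chain with absolute values and $\lesssim$ in place of $=$ (Tonelli on the positive integrand) before invoking Fubini for the signed version. Second, completing the square in $a'$ (and later in $b'$) moves the contour to a complex translate of $\mathbb{R}^{n-1}$, which needs a one-line Cauchy-theorem justification using $\mathrm{Re}\,\rho>0$ and the decay of the integrand. Third, the exact constant $C_1(n,r,s,t)$ is asserted to ``telescope''; since the lemma claims an identity rather than an equivalence, that product of Beta factors and powers of $2$ and $\pi$ has to be carried out explicitly. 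None of these is a conceptual gap --- the method delivers the lemma --- but as written the proof establishes the result only up to these routine verifications.
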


The following lemma is commonly encountered in the operator theory on function spaces topic, illustrating that the growth of functions in Bergman spaces is controlled.
\begin{lemma}\label{lem ci tiao he fang da}
	
	On the Bergman space $A_{\alpha}^{p}\left( T_B \right)$, every valuation functional is a bounded
	linear functional. More specifically, each function $f\in T_B$ has the property 
	\[\left| f\left( z \right) \right|^p\le \frac{C}{\rho \left( z \right) ^{n+\alpha+1}}\int_{D\left( z,r \right)}{\left| f\left( w \right) \right|^pdV_\alpha\left( w \right)}.\]
\end{lemma}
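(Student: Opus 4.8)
The plan is to combine the sub-mean value inequality for $|f|^{p}$ with the affine homogeneity of $T_{B}$. Since $f$ is holomorphic on $T_{B}$, the function $\log|f|$ is plurisubharmonic, so $|f|^{p}=\exp(p\log|f|)$ is plurisubharmonic and in particular subharmonic on $T_{B}$. Hence, whenever a closed Euclidean ball $\overline{B(a,\delta)}$ is contained in $T_{B}$, one has
$$|f(a)|^{p}\le\frac{1}{V(B(a,\delta))}\int_{B(a,\delta)}|f(w)|^{p}\,dV(w).$$
I would first establish the estimate at the distinguished point $\mathbf{i}=(0',i)$, where $\rho(\mathbf{i})=1$. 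Through $\Phi^{-1}$ the ball $D(\mathbf{i},r)$ corresponds to a Bergman ball of $\mathbb{B}_{n}$, which is relatively compact there; since $\Phi$ is a homeomorphism, $\overline{D(\mathbf{i},r)}$ is a compact subset of $T_{B}$. It therefore contains some Euclidean ball $B(\mathbf{i},\delta_{r})$ with closure in $T_{B}$, and $\rho$ lies between two positive constants on $\overline{D(\mathbf{i},r)}$. Applying the sub-mean value inequality on $B(\mathbf{i},\delta_{r})$ and then trading $dV$ for $dV_{\alpha}$ at the cost of a constant depending only on $r$ gives $|f(\mathbf{i})|^{p}\le C_{r}\int_{D(\mathbf{i},r)}|f(w)|^{p}\,dV_{\alpha}(w)$.

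Next I would transfer this to an arbitrary $z\in T_{B}$ by a biholomorphic self-map of $T_{B}$. The group generated by the real translations $z\mapsto z+b$ ($b\in\mathbb{R}^{n}$), the Heisenberg-type translations $(z',z_{n})\mapsto(z'+ic',\,z_{n}+2c'\cdot z'+i|c'|^{2})$ ($c'\in\mathbb{R}^{n-1}$) and the dilations $\delta_{t}(z',z_{n})=(tz',t^{2}z_{n})$ ($t>0$) acts transitively on $T_{B}$; the first two maps have constant real Jacobian $1$ and preserve $\rho$, while $\delta_{t}$ has constant real Jacobian $t^{2n+2}$ and satisfies $\rho(\delta_{t}w)=t^{2}\rho(w)$. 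Choosing $b,c',t$ appropriately, with $t=\sqrt{\rho(z)}$, yields an automorphism $\psi$ with $\psi(\mathbf{i})=z$, constant real Jacobian $\rho(z)^{n+1}$ and $\rho(\psi(w))=\rho(z)\,\rho(w)$ for every $w$. Setting $g=f\circ\psi\in H(T_{B})$, applying the base-point estimate to $g$, using $g(\mathbf{i})=f(z)$, and changing variables $w=\psi(u)$ — here $\psi(D(\mathbf{i},r))=D(z,r)$ because $\Phi$, hence $\psi$, is an isometry of Bergman metrics — the Jacobian factor $\rho(z)^{-(n+1)}$ and the weight factor $\rho(z)^{-\alpha}$ combine to give $dV_{\alpha}(u)=\rho(z)^{-(n+1+\alpha)}\,dV_{\alpha}(w)$, which is precisely the asserted inequality.

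The step that needs the most care is exhibiting the transitive family of affine automorphisms and verifying their Jacobians and their action on $\rho$; everything else is just the sub-mean value property together with a bookkeeping change of variables. An alternative route, avoiding the explicit automorphisms, is to push the problem to $\mathbb{B}_{n}$ via $\Phi$, invoke the classical point-evaluation estimate on the weighted Bergman spaces of the ball, and carry out the change of variables using $\rho(\Phi(\xi))=(1-|\xi|^{2})/|1+\xi_{n}|^{2}$ and $|J_{\mathbb{C}}\Phi(\xi)|\simeq|1+\xi_{n}|^{-(n+1)}$; one then only needs that $|1+\xi_{n}|$ is comparable to a constant on the Bergman ball, which follows from Lemma \ref{lem dengjia} together with the ball analogue of Lemma \ref{lem ceByuanpan}.
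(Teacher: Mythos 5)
The paper states this lemma without proof, presenting it as a standard fact, so there is no in-paper argument to compare against; your proposal supplies a genuine, self-contained proof, and it is correct. The two ingredients both check out: (i) for any $p>0$, $|f|^{p}=\exp(p\log|f|)$ is plurisubharmonic, hence satisfies the volume sub-mean-value inequality on Euclidean balls, which gives the estimate at the base point $\mathbf{i}$ once one notes that $\overline{D(\mathbf{i},r)}$ is a compact subset of $T_{B}$ on which $\rho$ is pinched between positive constants; (ii) the affine maps you list are indeed automorphisms of $T_{B}$ — the Heisenberg-type map sends $\operatorname{Im}z_{n}\mapsto \operatorname{Im}z_{n}+2c'\cdot y'+|c'|^{2}$ while $|\operatorname{Im}z'|^{2}\mapsto|y'|^{2}+2c'\cdot y'+|c'|^{2}$, so $\rho$ is preserved with unit Jacobian, and $\delta_{t}$ gives $\rho\circ\delta_{t}=t^{2}\rho$ with real Jacobian $t^{2n+2}$ — and the composition $\tau_{x}\circ h_{y'}\circ\delta_{\sqrt{\rho(z)}}$ carries $\mathbf{i}$ to $z$, so the bookkeeping $dV_{\alpha}(u)=\rho(z)^{-(n+1+\alpha)}dV_{\alpha}(w)$ and $\psi(D(\mathbf{i},r))=D(z,r)$ yields exactly the stated bound with a constant depending only on $n,\alpha,r$. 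Two cosmetic points: the identity $\psi(D(\mathbf{i},r))=D(z,r)$ follows from $\psi$ being a biholomorphic \emph{self}-map of $T_{B}$ (Krantz's proposition with $\varOmega_{1}=\varOmega_{2}=T_{B}$), not from any property of $\varPhi$; and the hypothesis in the statement should of course read $f\in H(T_{B})$ rather than $f\in T_{B}$. Your alternative route through $\varPhi$ and the known point-evaluation estimate on $\mathbb{B}_{n}$ would also work, but the automorphism argument is cleaner since it avoids estimating the distortion $|1+\xi_{n}|$ on Bergman balls.
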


The following lemma \ref{lem zitui} is a utility lemma, and its proof can be found in \cite{Li}.

\begin{lemma}\label{lem zitui}
	As $\left| z \right|$ approaches infinity, $\rho \left( z,\mathbf{i} \right)$ also tends to infinity.
\end{lemma}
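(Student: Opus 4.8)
The plan is to reduce the statement to an explicit lower bound for $|\rho(z,\mathbf{i})|$ in terms of the real coordinates of $z$. Write $z=x+iy$ with $x=(x',x_n)\in\mathbb{R}^n$ and $y=(y',y_n)\in B$, and recall $\mathbf{i}=(0',i)$ together with $\rho(z,w)=\tfrac14\bigl((z'-\overline{w'})^2-2i(z_n-\overline{w_n})\bigr)$. First I would carry out the elementary computation, separating real and imaginary parts, to obtain
$$4\,\rho(z,\mathbf{i})=\bigl(|x'|^2-|y'|^2+2y_n+2\bigr)+i\,\bigl(2\,(x'\cdot y')-2x_n\bigr).$$
Since $z\in T_B$ forces $\rho(z)=y_n-|y'|^2>0$, the real part rewrites as $|x'|^2+\rho(z)+y_n+2$, which is strictly positive; this is the one place where membership in the tube is genuinely used.

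From this identity one reads off two lower bounds: $|\rho(z,\mathbf{i})|\ge\tfrac14\bigl(|x'|^2+y_n+2\bigr)$ from the real part (which, being positive, equals its modulus), and $|\rho(z,\mathbf{i})|\ge\tfrac12\,\bigl|x_n-x'\cdot y'\bigr|$ from the imaginary part. The first bound immediately controls $|x'|$ and $y_n$; combined with the defining inequality $|y'|^2<y_n$ of $B$ it also controls $|y'|$. The second bound then controls $|x_n|$ via the triangle inequality $|x_n|\le|x_n-x'\cdot y'|+|x'|\,|y'|$ together with Cauchy--Schwarz $|x'\cdot y'|\le|x'|\,|y'|$.

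Assembling these estimates gives a bound of the shape $|z|^2\lesssim|\rho(z,\mathbf{i})|+|\rho(z,\mathbf{i})|^2$ (concretely $|z|^2\le 8\,|\rho(z,\mathbf{i})|+52\,|\rho(z,\mathbf{i})|^2$). Hence, whenever $|\rho(z,\mathbf{i})|$ stays bounded so does $|z|$, and the contrapositive is precisely the assertion of the lemma. I do not foresee any real difficulty; the only mild subtlety is that $x_n$ enters $\rho(z,\mathbf{i})$ only through the imaginary part and is tangled up with $x'\cdot y'$, so one must bound $y'$ first — which is exactly what the constraint $|y'|^2<y_n$ together with the already-established bound on $y_n$ provides — before the bound on $x_n$ can be extracted.
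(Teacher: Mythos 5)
Your proof is correct, and it is self-contained where the paper merely defers to the earlier preprint \cite{Li} for this lemma. The decomposition $4\rho(z,\mathbf{i})=\left(|x'|^2+\rho(z)+y_n+2\right)+2i\left(x'\cdot y'-x_n\right)$ checks out, the order in which you extract the bounds (first $|x'|$ and $y_n$ from the real part, then $|y'|$ from $|y'|^2<y_n$, and only then $|x_n|$ from the imaginary part via Cauchy--Schwarz) is exactly the right way to untangle the variables, and the resulting quantitative estimate $|z|^2\le 8\,|\rho(z,\mathbf{i})|+52\,|\rho(z,\mathbf{i})|^2$ immediately gives the contrapositive of the stated claim; as a bonus it also recovers the bound $|\rho(z,\mathbf{i})|\ge 1/2$ used elsewhere in the paper.
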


We provide several lemmas related to the properties of reproducing kernels, as seen in Lemma \ref{lem zai shng he de da xiao} and Lemma \ref{lem ruo shou lian}.
\begin{lemma}\label{lem zai shng he de da xiao}
	Let $1<p<\infty,\alpha>-1$ for each $z>0$, the Bergman kernel function $K_\alpha(z,w)$ is in $A_\alpha^p(T_B),$ and its norm is $C\rho \left( z \right) ^{-\left( n+\alpha+1 \right) /p'}$, where  $p'=p/(p-1)$ and $C$ is a positive constant only depending on $n,\alpha,$ and $p.$
\end{lemma}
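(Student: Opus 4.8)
The plan is to reduce the statement to a direct application of the integral formula in Lemma \ref{lem jifendengshi}. Fix $z\in T_B$, so that $\rho(z)>0$. Writing out the kernel, $K_\alpha(w,z)=\frac{\Gamma(n+\alpha+1)}{2^{n+1}\pi^n\Gamma(\alpha+1)}\,\rho(w,z)^{-(n+\alpha+1)}$, and since $\rho(w,z)$ is a zero-free holomorphic function of $w$ on $T_B$ (zero-free because $|\rho(w,z)|^2\ge \rho(w)\rho(z)>0$, which is built into the finiteness of $\beta_{T_B}$), the function $w\mapsto K_\alpha(w,z)$ is holomorphic on $T_B$. It therefore remains only to evaluate its $L^p_\alpha$-norm.

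First I would record the pointwise identity $|K_\alpha(w,z)|=|K_\alpha(z,w)|$, which follows from $\rho(w,z)=\overline{\rho(z,w)}$. Hence, using $dV_\alpha(w)=\rho(w)^\alpha\,dV(w)$,
\[
\lVert K_\alpha(\cdot,z)\rVert_{p,\alpha}^p=\int_{T_B}\left|K_\alpha(w,z)\right|^p\,dV_\alpha(w)=C^p\int_{T_B}\frac{\rho(w)^\alpha}{\left|\rho(z,w)\right|^{(n+\alpha+1)p}}\,dV(w),
\]
where $C=\frac{\Gamma(n+\alpha+1)}{2^{n+1}\pi^n\Gamma(\alpha+1)}$.

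Next I would apply the ``in particular'' part of Lemma \ref{lem jifendengshi} with parameters $s=(n+\alpha+1)p$ and $t=\alpha$. The hypotheses hold: $t=\alpha>-1$ by assumption, and $s-t-(n+1)=(n+\alpha+1)p-\alpha-(n+1)=(n+\alpha+1)(p-1)>0$ since $p>1$ and $n+\alpha+1>0$, so indeed $s-t>n+1$. The lemma then gives
\[
\int_{T_B}\frac{\rho(w)^\alpha}{\left|\rho(z,w)\right|^{(n+\alpha+1)p}}\,dV(w)=\frac{C_1(n,s,t)}{\rho(z)^{(n+\alpha+1)(p-1)}}<\infty.
\]
Combining the last two displays, taking $p$-th roots, and using $(p-1)/p=1/p'$ yields
\[
\lVert K_\alpha(\cdot,z)\rVert_{p,\alpha}=\bigl(C^p\,C_1(n,s,t)\bigr)^{1/p}\,\rho(z)^{-(n+\alpha+1)/p'},
\]
with constant depending only on $n,\alpha,p$. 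In particular the norm is finite, so $K_\alpha(\cdot,z)\in A_\alpha^p(T_B)$, completing the argument.

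There is no deep obstacle here. The only points requiring care are (i) checking the exponent inequality $s-t>n+1$ so that Lemma \ref{lem jifendengshi} applies and returns a finite value rather than $+\infty$, and (ii) staying consistent about the conjugation convention — namely that the holomorphic object is $w\mapsto K_\alpha(w,z)$, while the modulus appearing in the integrand is symmetric in $z$ and $w$.
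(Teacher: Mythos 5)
Your proof is correct and is exactly the intended argument: the paper states this lemma without proof, but the computation reduces, as you do, to the ``in particular'' case of Lemma \ref{lem jifendengshi} with $s=(n+\alpha+1)p$ and $t=\alpha$, whose hypotheses $t>-1$ and $s-t-(n+1)=(n+\alpha+1)(p-1)>0$ you verify correctly. The exponent bookkeeping $(p-1)/p=1/p'$ and the remark about which variable carries the holomorphy are both right, so nothing is missing.
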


\begin{lemma}\label{lem ruo shou lian}
	For $1<p<\infty$ and $\alpha>-1$, we have $K_{\alpha,z}\lVert K_{\alpha,z} \rVert _{p,\alpha}^{-1}\rightarrow 0$ weakly in $A_{\alpha}^{p}\left( T_B \right)$ as $z\rightarrow \partial \widehat{T_B}.$ 
\end{lemma}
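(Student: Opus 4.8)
The plan is to identify $(A_\alpha^p(T_B))^{*}$ with $A_\alpha^{p'}(T_B)$, $p'=p/(p-1)$, under the pairing $\langle f,g\rangle_\alpha=\int_{T_B}f\overline{g}\,dV_\alpha$, and then reduce the weak convergence to a scalar estimate that is controlled by the two pointwise inputs already available: the kernel size estimate (Lemma \ref{lem zai shng he de da xiao}) and the sub-mean-value inequality (Lemma \ref{lem ci tiao he fang da}). Concretely: $A_\alpha^p(T_B)$ is a closed subspace of the reflexive space $L_\alpha^p(T_B)$ (a $\lVert\cdot\rVert_{p,\alpha}$-limit of holomorphic functions is holomorphic, since Lemma \ref{lem ci tiao he fang da} forces local uniform convergence), hence the functionals $f\mapsto\langle f,g\rangle_\alpha$, $g\in A_\alpha^{p'}(T_B)$, exhaust its dual (this uses the $L_\alpha^p$-boundedness of $P_\alpha$). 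The family $\{K_{\alpha,z}\lVert K_{\alpha,z}\rVert_{p,\alpha}^{-1}\}$ has norm $1$, so it is enough to show that $\langle K_{\alpha,z}\lVert K_{\alpha,z}\rVert_{p,\alpha}^{-1},g\rangle_\alpha\to 0$ as $z\to\partial\widehat{T_B}$ for each fixed $g\in A_\alpha^{p'}(T_B)$. By the reproducing property $|\langle K_{\alpha,z},g\rangle_\alpha|=|g(z)|$, so the goal becomes
$$\frac{|g(z)|}{\lVert K_{\alpha,z}\rVert_{p,\alpha}}\longrightarrow 0\qquad(z\to\partial\widehat{T_B}).$$

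Next I would combine the two pointwise inputs. Lemma \ref{lem zai shng he de da xiao} gives $\lVert K_{\alpha,z}\rVert_{p,\alpha}\simeq\rho(z)^{-(n+\alpha+1)/p'}$, while Lemma \ref{lem ci tiao he fang da} applied to $g$ with exponent $p'$ yields
$$|g(z)|^{p'}\le\frac{C}{\rho(z)^{n+\alpha+1}}\int_{D(z,r)}|g(w)|^{p'}\,dV_\alpha(w).$$
Multiplying these two facts together, the powers of $\rho(z)$ cancel exactly and we obtain
$$\frac{|g(z)|}{\lVert K_{\alpha,z}\rVert_{p,\alpha}}\lesssim\left(\int_{D(z,r)}|g(w)|^{p'}\,dV_\alpha(w)\right)^{1/p'}.$$
So it remains to prove that $\mu(D(z,r))\to 0$ as $z\to\partial\widehat{T_B}$, where $d\mu:=|g|^{p'}\,dV_\alpha$ is a \emph{finite} Borel measure on $T_B$ because $g\in A_\alpha^{p'}(T_B)$.

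For this last step, since $\widehat{T_B}$ is metrizable it suffices to fix an arbitrary sequence $z_k\to\partial\widehat{T_B}$ and show $\mu(D(z_k,r))\to 0$. Fix $w\in T_B$. Transporting the Bergman metric of $\mathbb{B}_n$ by the biholomorphic isometry $\varPhi:\mathbb{B}_n\to T_B$ shows that $\beta=\beta_{T_B}$ is a complete metric whose closed balls are compact subsets of $T_B$; as $z_k\to\partial\widehat{T_B}$ means $z_k$ leaves every compact subset of $T_B$, it follows that $\beta(z_k,w)\to\infty$, hence $w\notin D(z_k,r)$ for all large $k$, i.e. $\chi_{D(z_k,r)}(w)\to 0$. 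Since $\chi_{D(z_k,r)}\le 1\in L^1(\mu)$, dominated convergence gives $\mu(D(z_k,r))=\int_{T_B}\chi_{D(z_k,r)}\,d\mu\to 0$. Running the chain of estimates backwards completes the proof.

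The main obstacle I anticipate is precisely the geometric fact used in the last paragraph: that $\beta(z,w)\to\infty$ for fixed $w$ as $z\to\partial\widehat{T_B}$, equivalently that $\beta$-balls in $T_B$ are relatively compact. One can see it either from completeness of the Bergman metric on $\mathbb{B}_n$ pulled back by $\varPhi$, or directly from the explicit formula $\beta_{T_B}(z,w)=\tanh^{-1}\sqrt{1-\rho(z)\rho(w)/|\rho(z,w)|^2}$, treating the cases $\rho(z)\to 0$ and $|z|\to\infty$ separately (the latter via Lemma \ref{lem zitui}); I would present the $\varPhi$-pullback version as the cleanest. A secondary point to pin down carefully is the duality $(A_\alpha^p(T_B))^{*}\cong A_\alpha^{p'}(T_B)$, which relies on the boundedness of $P_\alpha$ on $L_\alpha^p(T_B)$ and for which I would cite \cite{Li} or \cite{jiaxin2023bergman}.
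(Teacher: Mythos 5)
The paper states this lemma without proof (deferring, like Lemma \ref{lem zitui}, to \cite{Li}), so there is no in-text argument to compare against; your proposal is correct and is the standard proof of this fact. The reduction via the duality $(A_\alpha^p)^*\cong A_\alpha^{p'}$ and the reproducing property to showing $|g(z)|\,\lVert K_{\alpha,z}\rVert_{p,\alpha}^{-1}\to 0$, the exact cancellation of the $\rho(z)$ powers coming from Lemmas \ref{lem zai shng he de da xiao} and \ref{lem ci tiao he fang da}, and the final absolute-continuity step using that closed Bergman balls $\overline{D(z,r)}=\varPhi\bigl(\overline{D_{\mathbb{B}_n}(\varPhi^{-1}(z),r)}\bigr)$ are compact and escape every compact set as $z\to\partial\widehat{T_B}$ are all sound. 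The only ingredient not available inside this paper is the duality itself, i.e.\ the $L_\alpha^p$-boundedness of $P_\alpha$ for $1<p<\infty$; you correctly flag this, and it does follow from a Schur test based on Lemma \ref{lem jifendengshi}, so it is a citation to supply rather than a gap.
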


\section{Carleson measures on $T_B$}
\ \ \ \
Since the Carleson measure is a very important tool in operator theory, we first need to characterize the Carleson measures and the vanishing Carleson measures. Carleson measures and vanishing Carleson measures on the unit disk and unit ball have numerous characterizations, as detailed in \cite{zhu2005spaces,zhu2007operator}. We now provide characterizations of Carleson measures and vanishing Carleson measures on tubular domains, which will serve as  basis results for our subsequent theorems.

In order to get the characterization of Carleson measures, we need the help of the following lemma \ref*{lem xin 3}.
\begin{lemma}\label{lem xin 3}
	For $1 \leq \lambda<\infty$, $-1<\alpha<\infty$, and $\mu$ being a $(\lambda, \alpha)$-Carleson measure on $T_B$, satisfying $\mu \left( D\left( z,r \right) \right) \lesssim \rho \left( z \right) ^{\left( n+1+\alpha \right) \lambda}$ for any Bergman ball $D\left( z,r \right)$, the following inequality holds for any $f \in H\left(T_B\right)$ and $0<p<\infty$:
	$$
	\int_{T_B}|f(z)|^p d \mu(z) \lesssim \int_{T_B}|f(z)|^p\rho (z)^{(n+1+\alpha) \lambda-(n+1)} d V(z) .
	$$
\end{lemma}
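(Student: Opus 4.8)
The plan is to discretize the integral over $T_B$ along a lattice and then apply the sub-mean-value estimate of Lemma \ref{lem ci tiao he fang da} together with the assumed growth bound $\mu(D(z,r)) \lesssim \rho(z)^{(n+1+\alpha)\lambda}$. Fix the radius $r>0$ and let $\{a_k\}$ be a lattice as provided by Lemma \ref{lem duliang}. First I would record the standard consequence of Lemma \ref{lem dengjia} (applied with first argument $z$ and with first argument $a_k$, using $|\rho(z,a_k)|=|\rho(a_k,z)|$) that
$$\rho(z) \simeq \rho(a_k) \qquad \text{whenever } z \in D(a_k, 2r),$$
with comparison constants depending only on $r$, hence uniform in $k$. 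By property $(1)$ of the lattice,
$$\int_{T_B} |f(z)|^p \, d\mu(z) \le \sum_k \int_{D(a_k,r)} |f(z)|^p \, d\mu(z).$$

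For $z \in D(a_k,r)$ we have $D(z,r) \subseteq D(a_k,2r)$, so Lemma \ref{lem ci tiao he fang da} — which holds for every $f \in H(T_B)$, since $|f|^p$ is subharmonic and the inequality is trivially true when its right side diverges — combined with $\rho(z)\simeq\rho(a_k)$ yields
$$\sup_{z\in D(a_k,r)}|f(z)|^p \;\lesssim\; \frac{1}{\rho(a_k)^{n+\alpha+1}}\int_{D(a_k,2r)}|f(w)|^p \, dV_\alpha(w).$$
Integrating over $D(a_k,r)$ against $\mu$ and using the hypothesis $\mu(D(a_k,r)) \lesssim \rho(a_k)^{(n+1+\alpha)\lambda}$ gives, for each $k$,
$$\int_{D(a_k,r)} |f|^p \, d\mu \;\lesssim\; \rho(a_k)^{(n+1+\alpha)(\lambda-1)} \int_{D(a_k,2r)} |f(w)|^p \, dV_\alpha(w).$$
Now $dV_\alpha(w) = \rho(w)^\alpha\,dV(w)$ and $\rho(w)\simeq\rho(a_k)$ on $D(a_k,2r)$, so the weight passes freely in and out of the integral; since $(n+1+\alpha)(\lambda-1)+\alpha = (n+1+\alpha)\lambda-(n+1)$, the estimate becomes
$$\int_{D(a_k,r)} |f|^p \, d\mu \;\lesssim\; \int_{D(a_k,2r)} |f(w)|^p\, \rho(w)^{(n+1+\alpha)\lambda-(n+1)}\, dV(w),$$
with implied constant independent of $k$.

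Finally I would sum over $k$ and invoke property $(3)$ of the lattice, namely $\sum_k \mathbf{1}_{D(a_k,2r)} \le N$ pointwise on $T_B$, which bounds $\sum_k \int_{D(a_k,2r)}(\cdot)\,dV$ by $N\int_{T_B}(\cdot)\,dV$ for any nonnegative integrand. This yields exactly
$$\int_{T_B} |f(z)|^p \, d\mu(z) \;\lesssim\; \int_{T_B} |f(z)|^p\, \rho(z)^{(n+1+\alpha)\lambda-(n+1)}\, dV(z),$$
as claimed (the case where the right-hand side is infinite being vacuous). This is not a deep argument; the only steps requiring attention — rather than genuine difficulty — are verifying that $\rho$ remains comparable to $\rho(a_k)$ throughout $D(a_k,2r)$ so the weight can be moved cleanly through the integrals, and checking that all the implied constants are uniform in $k$ (they are, depending only on $r,n,\alpha,p$ via Lemmas \ref{lem duliang}, \ref{lem dengjia}, \ref{lem ceByuanpan} and \ref{lem ci tiao he fang da}). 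I also note that the proof uses only the ball-growth hypothesis on $\mu$; the $(\lambda,\alpha)$-Carleson assumption in the statement is not needed beyond it.
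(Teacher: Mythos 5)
Your argument is correct, and it reaches the stated estimate by a genuinely different route from the paper. The paper does not use a lattice at all: it applies the sub-mean-value estimate of Lemma \ref{lem ci tiao he fang da} pointwise in the form $|f(z)|^p\lesssim \rho(z)^{-(n+1)}\int_{D(z,r)}|f(w)|^p\,dV(w)$, integrates against $d\mu$, and then swaps the order of integration by Fubini's theorem, using the symmetry $z\in D(w,r)\Leftrightarrow w\in D(z,r)$ and Lemma \ref{lem dengjia} to replace $\rho(z)^{-(n+1)}$ by $\rho(w)^{-(n+1)}$; the inner integral then collapses to $\mu(D(w,r))/\rho(w)^{n+1}$ and the ball-growth hypothesis finishes the proof. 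Your version instead discretizes via the $r$-lattice of Lemma \ref{lem duliang}, takes suprema over $D(a_k,r)$, and resums using the bounded-overlap property $(3)$. Both proofs use only the hypothesis $\mu(D(z,r))\lesssim\rho(z)^{(n+1+\alpha)\lambda}$ (your closing remark on this point is accurate and applies equally to the paper's proof), and all the comparability and uniformity checks you flag do go through via Lemmas \ref{lem dengjia} and \ref{lem duliang}. The Fubini route is shorter and avoids the covering machinery; your route has the mild advantage of being the same discretization scheme that the paper reuses in the proofs of Theorems \ref{xt1} and \ref{dth 1}, so nothing new is needed beyond what is already set up, at the cost of an extra layer of bookkeeping (the passage from $D(a_k,r)$ to $D(a_k,2r)$ and the factor $N$).
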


\begin{proof}
	
	By Lemma \ref{lem ci tiao he fang da}, for $0 < r < 1$, we have
	$$
	|f\left( z \right) |^p\le \frac{C}{\rho \left( z \right) ^{n+1}}\int_{D\left( z,r \right)}{|}f\left( w \right) |^pdV\left( w \right)  .
	$$
	Hence, using Fubini's theorem along with Lemma \ref{lem dengjia}, we obtain
	$$
	\begin{aligned}
	\int_{T_B}{|}f\left( z \right) |^pd\mu \left( z \right)& \lesssim \int_{T_B}{\frac{1}{\rho \left( z \right) ^{n+1}}}\int_{D\left( z,r \right)}{|}f\left( w \right) |^pdV\left( w \right) d\mu \left( z \right)\\
	&=\int_{T_B}{|}f\left( w \right) |^p\int_{D\left( w,r \right)}{\frac{d\mu \left( z \right)}{\rho \left( z \right) ^{n+1}}}dV\left( w \right)\\
	&\lesssim \int_{T_B}{|}f\left( w \right) |^p\int_{D\left( w,r \right)}{\frac{d\mu \left( z \right)}{\rho \left( w \right) ^{n+1}}}dV\left( w \right)\\
	&=\int_{T_B}{|}f\left( w \right) |^p\frac{\mu \left( D\left( w,r \right) \right)}{\rho \left( w \right) ^{n+1}}dV\left( w \right)\\
	&\lesssim \int_{T_B}{|}f\left( w \right) |^p\rho \left( w \right) ^{\left( n+1+\alpha \right) \lambda -\left( n+1 \right)}dV\left( w \right) .
	\end{aligned}
	$$
	This completes the proof of Lemma \ref{lem xin 3}.
\end{proof}

\begin{theorem}\label{xt1}
	Suppose $1 \leq \lambda < \infty $ and $-1 < \alpha < \infty $, the followiong conditions are equivalent:
	\\
		
		$(1)$ $\mu$ is a $(\lambda , \alpha )$-Carleson measure.
		
		$(2)$ For any real number $r$ with $0<r<1$ and any $a \in T_B$\[\mu \left( D\left( a,r \right) \right) \lesssim \rho \left( a \right) ^{\left( n+1+\alpha \right) \lambda}.\]
		
		$(3)$ For some (every) $t>0$, the Berezin-type transform of $\mu$ \[B_{\lambda,t}\left( \mu \right) \in L^{\infty}\left( T_B \right).\]

\end{theorem}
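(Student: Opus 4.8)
The plan is to prove the cycle $(1)\Rightarrow(2)\Rightarrow(1)$ together with $(2)\Leftrightarrow(3)$. The phrase ``for some (every) $t>0$'' in $(3)$ will be handled automatically: the argument for $(2)\Rightarrow(3)$ will yield $B_{\lambda,t}(\mu)\in L^\infty(T_B)$ for \emph{every} $t>0$, while $(3)$ for any \emph{single} $t>0$ will already force $(2)$; since the ``for all $t$'' statement trivially implies the ``for some $t$'' statement, this closes the loop.

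\emph{The implication $(1)\Rightarrow(2)$} is a test-function argument. Fix $a\in T_B$ and a constant $b>0$, and set
\[
f_a(z)=\frac{\rho(a)^{b}}{\rho(z,a)^{\,b+(n+1+\alpha)/p}},
\]
which is holomorphic on $T_B$ because $\rho(\cdot,a)$ is a polynomial that does not vanish on $T_B$ (otherwise $K_\alpha(z,a)$ would be singular), so any real power of it is well defined and holomorphic on the simply connected tube $T_B$. Using the integral identity of Lemma \ref{lem jifendengshi} one finds $\|f_a\|_{p,\alpha}\simeq 1$ uniformly in $a$, so the defining Carleson inequality applies to $f_a$ with a constant independent of $a$. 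By Lemma \ref{lem dengjia}, $|\rho(z,a)|\simeq\rho(a)$ on $D(a,r)$, hence $|f_a(z)|\simeq\rho(a)^{-(n+1+\alpha)/p}$ there; restricting $\int_{T_B}|f_a|^q\,d\mu\lesssim\|f_a\|_{p,\alpha}^q\simeq 1$ to $D(a,r)$ gives $\rho(a)^{-(n+1+\alpha)\lambda}\mu(D(a,r))\lesssim 1$, which is $(2)$. Conversely, for $(2)\Rightarrow(1)$, let $f\in A_\alpha^p(T_B)$ and $q=\lambda p$. Lemma \ref{lem xin 3}, whose proof uses only the growth estimate in $(2)$, applied with exponent $q$ gives
\[
\int_{T_B}|f|^q\,d\mu\lesssim\int_{T_B}|f(z)|^q\,\rho(z)^{(n+1+\alpha)\lambda-(n+1)}\,dV(z).
\]
Splitting $|f|^q=|f|^{q-p}\,|f|^p$ with $q-p=(\lambda-1)p\ge0$ (here $\lambda\ge1$ is used) and estimating $|f(z)|^{q-p}$ by the sub-mean-value inequality of Lemma \ref{lem ci tiao he fang da} yields $|f(z)|^{q-p}\lesssim\|f\|_{p,\alpha}^{q-p}\,\rho(z)^{-(n+1+\alpha)(\lambda-1)}$. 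Substituting, the exponent of $\rho$ collapses to exactly $\alpha$, so the right-hand side is $\lesssim\|f\|_{p,\alpha}^{q-p}\int_{T_B}|f|^p\rho^\alpha\,dV=\|f\|_{p,\alpha}^q$, which is $(1)$.

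\emph{The equivalence $(2)\Leftrightarrow(3)$.} For $(2)\Rightarrow(3)$, fix $t>0$ and take a lattice $\{a_k\}$ from Lemma \ref{lem duliang}. For $z\in T_B$, covering $T_B$ by the balls $D(a_k,r)$ and using Lemma \ref{lem dengjia},
\[
\int_{T_B}\frac{d\mu(w)}{|\rho(z,w)|^{(n+1+\alpha)\lambda+t}}\le\sum_k\int_{D(a_k,r)}\frac{d\mu(w)}{|\rho(z,w)|^{(n+1+\alpha)\lambda+t}}\lesssim\sum_k\frac{\mu(D(a_k,r))}{|\rho(z,a_k)|^{(n+1+\alpha)\lambda+t}}.
\]
Inserting $(2)$, and using that $\rho(w)\simeq\rho(a_k)$ and $|\rho(z,w)|\simeq|\rho(z,a_k)|$ on $D(a_k,r)$ together with $V(D(a_k,r))\simeq\rho(a_k)^{n+1}$ from Lemma \ref{lem ceByuanpan}, each term is $\simeq\int_{D(a_k,r)}\rho(w)^{(n+1+\alpha)\lambda-(n+1)}|\rho(z,w)|^{-(n+1+\alpha)\lambda-t}\,dV(w)$. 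The bounded overlap in Lemma \ref{lem duliang} then turns the sum into a constant times $\int_{T_B}\rho(w)^{(n+1+\alpha)\lambda-(n+1)}|\rho(z,w)|^{-(n+1+\alpha)\lambda-t}\,dV(w)$, which by Lemma \ref{lem jifendengshi} equals $C\rho(z)^{-t}$ (the hypotheses hold because $\lambda\ge1$, $\alpha>-1$, $t>0$). Multiplying by $\rho(z)^t$ gives $B_{\lambda,t}(\mu)\in L^\infty(T_B)$ for every $t>0$. Conversely, if $B_{\lambda,t}(\mu)\in L^\infty(T_B)$ for some $t>0$, then for any $a\in T_B$, bounding the defining integral of $B_{\lambda,t}(\mu)(a)$ below by integration over $D(a,r)$ and using $|\rho(a,w)|\simeq\rho(a)$ there yields $\|B_{\lambda,t}(\mu)\|_\infty\ge B_{\lambda,t}(\mu)(a)\gtrsim\rho(a)^{-(n+1+\alpha)\lambda}\mu(D(a,r))$, i.e.\ $(2)$.

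I expect the main obstacle to be the bookkeeping in $(2)\Rightarrow(3)$: one must discretize via the lattice, reabsorb the ball-masses $\mu(D(a_k,r))$ into an integral against $\rho(w)^{(n+1+\alpha)\lambda-(n+1)}\,dV(w)$, and then verify that the resulting exponents satisfy the hypotheses of Lemma \ref{lem jifendengshi} (this is where $\lambda\ge1$ enters). The remaining implications are short applications of the lemmas collected in Section 3.
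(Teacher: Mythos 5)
Your proposal is correct; all the exponent arithmetic checks out and every lemma is invoked under hypotheses its proof actually needs. The route differs from the paper's in two places worth noting. First, the paper closes the cycle as $(1)\Rightarrow(2)\Rightarrow(3)\Rightarrow(1)$, and its $(3)\Rightarrow(1)$ goes through the lattice of Lemma \ref{lem duliang}: it extracts the ball condition from $(3)$, bounds $\sup_{D(a_k,r)}|f|^q$ by Lemma \ref{lem ci tiao he fang da}, and then uses the elementary inequality $\sum_k c_k^{\lambda}\le(\sum_k c_k)^{\lambda}$ for $\lambda\ge 1$ to resum. You instead prove $(2)\Rightarrow(1)$ directly by applying Lemma \ref{lem xin 3} with exponent $q$ and factoring $|f|^q=|f|^{q-p}|f|^p$, absorbing $|f|^{q-p}\lesssim\|f\|_{p,\alpha}^{q-p}\rho(z)^{-(n+1+\alpha)(\lambda-1)}$ via the pointwise growth estimate so that the weight collapses to $\rho(z)^{\alpha}$; this avoids the lattice entirely in that direction and is arguably cleaner, though it leans on your (correct) observation that the proof of Lemma \ref{lem xin 3} uses only the ball condition and not the full Carleson hypothesis stated in that lemma. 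Second, for $(2)\Rightarrow(3)$ the paper applies Lemma \ref{lem xin 3} in one line to the function $w\mapsto\rho(z,w)^{-((n+1+\alpha)\lambda+t)/p}$ and finishes with Lemma \ref{lem jifendengshi}, whereas you rediscretize through the lattice and reassemble the integral; both work, the paper's version being shorter. Your explicit $(3)\Rightarrow(2)$ (restricting the Berezin-type integral to $D(a,r)$) is the same estimate the paper uses implicitly when it asserts the ball condition inside its $(3)\Rightarrow(1)$ step, and it is what justifies the ``for some $t$'' half of $(3)$. A minor stylistic improvement on the paper: your test function in $(1)\Rightarrow(2)$ is genuinely holomorphic, whereas the paper's $g$ is written with $|\rho(w,a)|$ and is only the modulus of the holomorphic function actually being tested.
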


\begin{proof}
	
	$\left( 1 \right) \Rightarrow \left( 2 \right) 	$
	First assume that $\mu$ is a  $(\lambda , \alpha )$-Carleson measure for $A_{\alpha}^{p}(T_B)$. There is a constant $C>0$ such that \[\int_{T_B}{\left| f\left( z \right) \right|^q}d\mu \left( z \right) \le C\lVert f \rVert _{p,\alpha}^{q}\] for all $f \in A_{\alpha}^{p}(T_B)$. 
	
	In particular, if $a \in T_B$ and \[g(w)=\left( \frac{\rho \left( a \right) ^{n+1+\alpha}}{\left| \rho \left( w,a \right) \right|^{2\left( n+1+\alpha \right)}} \right) ^{1/p},  \  z \in T_B,\] then $ \lVert g\left( w \right) \rVert _{p,\alpha} $ is a constant, so \[\int_{D\left( a,r \right)}{\left| g\left( w \right) \right|^q}d\mu \left( w \right) \le \int_{T_B}{\left| g\left( w \right) \right|^q}d\mu \left( w \right) \le C.\] 
	
	According to Lemma \ref{lem dengjia}, $\left|\rho (w,a)\right|$ is comparable to $\left|\rho (a)\right|$ when $w \in D(a,r)$. we have $\mu \left( D\left( a,r \right) \right) \lesssim \rho \left( a \right) ^{\left( n+1+\alpha \right) \lambda}$.
	\\
	
	$\left( 2 \right) \Rightarrow \left( 3 \right) 	$
	It follows from Lemma \ref{lem xin 3} that \[
	B_{\lambda,t}\left( \mu \right) \left( z \right) \lesssim \rho \left( z \right) ^t\int_{T_B}{\frac{1}{|\rho \left( z,w \right) |^{\left( n+1+\alpha \right) \lambda +t}}}\rho \left( w \right) ^{\left( n+1+\alpha\right) \lambda -\left( n+1 \right)}dV\left( w \right) .        \]
	
	By calculation, we find that the parameters in the above expression satisfy the conditions of Lemma \ref{lem jifendengshi}, so the above expression is less than a constant.
	\\

	$\left( 3 \right) \Rightarrow \left( 1 \right) 	$
	Assume $(3)$ is ture. For any $f \in A_{\alpha}^{p}\left( T_B \right) $, we have
	$$
	\int_{T_B}{|}f\left( z \right) |^qd\mu \left( z \right) \le \sum_{k=1}^{+\infty}{\int_{D\left( a_k,r \right)}{|}}f|^qd\mu \le \sum_{k=1}^{+\infty}{\mu}\left( D\left( a_k,r \right) \right) \mathop{\text{sup}}_{z\in D\left( a_k,r \right)}|f\left( z \right) |^q
	.$$ for all $k \ge 1.$
	
	It follows from Lemma \ref{lem ci tiao he fang da} that
	$$
	\sup _{z \in D\left(a_k, r\right)}|f(z)|^p \lesssim \frac{1}{\rho\left(a_k\right)^{n+\alpha+1}} \int_{D\left(a_k, 2 r\right)}|f(w)|^p d V_{\alpha}(w).
	$$
	
	Then we obtain
	$$
	\int_{T_B}{|}f\left( z \right) |^qd\mu \left( z \right) \lesssim \sum_{k=1}^{\infty}{\frac{\mu \left( D\left( a_k,r \right) \right)}{\rho \left( a_k \right) ^{\left( n+\alpha +1 \right) \lambda}}}\left( \int_{D\left( a_k,2r \right)}{|}f\left( z \right) |^pdV_{\alpha}\left( z \right) \right) ^{\lambda},
	$$ where $\lambda=q/p$.
	\\
	
	From $(3)$, when $z\in D\left( a_k,r \right) $ holds, $\mu \left( D\left( a_k,r \right) \right)/\rho \left( a_k \right) ^{\left( n+\alpha +1 \right) \lambda}\lesssim C$ is true.
	\\
	
	Since $\lambda>1$, it follows that
	$$
	\begin{aligned}
	\int_{T_B}{|}f\left( z \right) |^qd\mu \left( z \right) &\lesssim \sum_{k=1}^{+\infty}{\left( \int_{D\left( a_k,2r \right)}{|}f\left( z \right) |^pdV_{\alpha}\left( z \right) \right) ^{\lambda}}\\
	&\lesssim \left( \sum_{k=1}^{+\infty}{\int_{D\left( a_k,2r \right)}{|}}f\left( z \right) |^pdV_{\alpha}\left( z \right) \right) ^{\lambda}\\
	&\le N^{\lambda}\lVert f \rVert _{p,\alpha}^{q},\\
	\end{aligned}
	$$
	where $N$ is as in Lemma \ref{lem duliang}.  The proof is complete.	
\end{proof}

\begin{theorem}\label{xt2}
	Suppose $1 \leq \lambda<\infty$ and $-1<\alpha<\infty$, the following statements are equivalent:
	\\
	
	$(1)$ $\mu$ is a vanishing $(\lambda, \alpha)$-Carleson measure.
	
	$(2)$ For some(any) $t>0$
	$$
	\lim_{a\rightarrow \partial \widehat{T_B}} \int_{T_B}{\frac{\rho \left( a \right) ^t}{|\rho \left( z,a \right) |^{\left( n+1+\alpha \right) \lambda +t}}}d\mu \left( z \right) =0.
	$$
	
	$(3)$ For any real number $r$ with $0<r<1$ and any $a \in T_B$
	$$
	\lim_{a\rightarrow \partial \widehat{T_B}} \frac{\mu \left( D\left( a,r \right) \right)}{\rho \left( a \right) ^{\left( n+1+\alpha \right) \lambda}}=0.
	$$

\end{theorem}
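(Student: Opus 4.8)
The plan is to establish the cycle of implications $(1)\Rightarrow(2)\Rightarrow(3)\Rightarrow(1)$, which is the ``vanishing'' counterpart of Theorem \ref{xt1}; the only genuinely new ingredient is a boundary-localization step that replaces each global estimate used there by a split into a fixed compact set and its complement. Throughout I write $\kappa=(n+1+\alpha)\lambda$ and fix a pair of positive integers $p,q$ with $q/p=\lambda$, and I use the standing assumption $\mu\in\mathcal M_+$, which guarantees $\mu$ is finite on compact subsets of $T_B$.

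For $(1)\Rightarrow(2)$, fix $t>0$ and introduce, for $a\in T_B$, the test function
$$
g_a(w)=\frac{\rho(a)^{t/q}}{\rho(w,a)^{(\kappa+t)/q}},
$$
which is holomorphic on $T_B$ since $\operatorname{Re}\rho(w,a)>0$. A direct computation with Lemma \ref{lem jifendengshi} shows that $\lVert g_a\rVert_{p,\alpha}$ is a constant independent of $a$, and that $B_{\lambda,t}(\mu)(a)=\int_{T_B}|g_a(w)|^q\,d\mu(w)$ identically. Hence it suffices to prove that $g_a\to 0$ uniformly on every compact subset of $T_B$ as $a\to\partial\widehat{T_B}$: the vanishing $(\lambda,\alpha)$-Carleson property then forces $\int_{T_B}|g_a|^q\,d\mu\to 0$, i.e.\ $(2)$. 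The locally uniform decay of $g_a$ follows from the identity $|\rho(z,w)|^2=\rho(z)\rho(w)\cosh^2\beta(z,w)$, from Lemma \ref{lem zitui} (handling $|a|\to\infty$), and from the elementary bound $|\rho(z,a)|\gtrsim_L|\rho(\mathbf{i},a)|$ valid for $z$ in any fixed compact $L$ (obtained by combining $\operatorname{Re}\rho(z,a)\ge\tfrac12(\rho(z)+\rho(a))$ with the triangle inequality for $\beta$). Alternatively, one may note that $g_a$ is a ``generalized normalized reproducing kernel'' and argue as in Lemma \ref{lem ruo shou lian} to get weak convergence to $0$, which for the uniformly norm-bounded holomorphic family $\{g_a\}$ upgrades to locally uniform convergence via Lemma \ref{lem ci tiao he fang da} and a normal-families argument.

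The implication $(2)\Rightarrow(3)$ is immediate: restricting the defining integral of $B_{\lambda,t}(\mu)(a)$ to $D(a,r)$ and invoking Lemma \ref{lem dengjia} (so $|\rho(z,a)|\simeq\rho(a)$ there) gives $B_{\lambda,t}(\mu)(a)\gtrsim\mu(D(a,r))/\rho(a)^{\kappa}$, so already $(2)$ for a single $t>0$ yields $(3)$. For $(3)\Rightarrow(1)$, fix a lattice $\{a_k\}$ as in Lemma \ref{lem duliang} and repeat the estimate from the proof of $(3)\Rightarrow(1)$ in Theorem \ref{xt1} to obtain, for any bounded sequence $\{f_j\}$ in $A_\alpha^p(T_B)$,
$$
\int_{T_B}|f_j|^q\,d\mu\lesssim\sum_k\frac{\mu(D(a_k,r))}{\rho(a_k)^{\kappa}}\left(\int_{D(a_k,2r)}|f_j|^p\,dV_\alpha\right)^{\!\lambda}.
$$
Given $\varepsilon>0$, condition $(3)$ supplies a compact $K\subset T_B$ with $\mu(D(a_k,r))/\rho(a_k)^{\kappa}<\varepsilon$ whenever $a_k\notin K$; the tail of the sum over $\{k:a_k\notin K\}$ is then $\le\varepsilon N^{\lambda}\lVert f_j\rVert_{p,\alpha}^{q}$ (using $\lambda\ge1$ to pull the sum inside the $\lambda$-th power and the bounded-overlap property from Lemma \ref{lem duliang}), while the sum over the finitely many $a_k\in K$ tends to $0$ as $j\to\infty$ because $f_j\to0$ uniformly on the compact set $\bigcup_{a_k\in K}\overline{D(a_k,2r)}$ and $\mu$ is finite there. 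Letting $j\to\infty$ and then $\varepsilon\to0$ gives $(1)$. Note that running the chain for one $t>0$ together with $(1)\Rightarrow(2)$ for every $t>0$ also yields the equivalence of the ``some $t$'' and ``any $t$'' versions of $(2)$.

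The main obstacle is the locally uniform decay of $g_a$ in the step $(1)\Rightarrow(2)$: a point of $\widehat{T_B}$ can be approached either with $\rho(a)\to0$ and $a$ bounded, or with $|a|\to\infty$ (in which case $\rho(a)$ need not be small), and these must be treated separately; in the second case a crude bound of $g_a$ purely in terms of $|\rho(\mathbf{i},a)|$ is too lossy, so one must keep track of the growth of $\cosh\beta(\mathbf{i},a)$ — unless one bypasses the issue entirely through the normal-families argument sketched above.
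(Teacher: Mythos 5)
Your proposal is correct and follows essentially the same route as the paper: the identical test functions $\rho(a)^{t/q}\rho(w,a)^{-((n+1+\alpha)\lambda+t)/q}$ for $(1)\Rightarrow(2)$, restriction of the Berezin-type integral to $D(a,r)$ plus Lemma \ref{lem dengjia} for $(2)\Rightarrow(3)$, and the lattice decomposition with an $\varepsilon$-split between a compact core and its tail for $(3)\Rightarrow(1)$. The only difference is that you treat the locally uniform decay of the test functions as $|a|\to\infty$ more carefully than the paper does (which relies on the crude bound via $|\rho(\mathbf{i},a)|$ and Lemma \ref{lem zitui}), but this is a refinement of the same argument rather than a different approach.
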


\begin{proof}
	
	$\left( 1 \right) \Rightarrow \left( 2 \right) 	$	
	If $\mu$ is a vanishing $(\lambda, \alpha)$-Carleson measure. Let $t>0$ and $$  f_a\left( z \right) =\frac{\rho \left( a \right) ^{t/q}}{\rho \left( z,a \right) ^{\left( n+1+\alpha \right) /p+t/q}},   $$  We calculate to obtain that $$\mathop{\text{sup}}_{a\in T_B}\lVert f_a\left( z \right) \rVert _{p,\alpha}<\infty. $$
	
	So, we have $$
	\int_{T_B}{\frac{\rho \left( a \right) ^t}{\left| \rho \left( z,a \right) \right|^{\left( n+1+\alpha  \right) \lambda +t}}}d\mu \left( z \right) \rightarrow 0
	$$
	as $a \rightarrow \partial \widehat{T_B}.$
	
	We denote $Q_k=\overline{D\left( \mathbf{i},r \right) }$, if we can prove that $f_a$ converges uniformly to 0 on every $Q_k$ as $a\rightarrow \partial \widehat{T_B}$, then the conclusion holds.
	For a given $Q_k$, we have $$
	\mathop{\text{sup}}_{z\in Q_k}\left| f_a\left( z \right) \right|\lesssim \frac{\rho \left( a \right) ^{t/q}}{\left| \rho \left( \mathbf{i},a \right) \right|^{\left( n+1+\alpha  \right) /p+t/q}}.
	$$
	
	Since $\left| \rho \left( \mathbf{i},a \right) \right|\ge 1/2$ for all $a \in T_B$, it follows that $\mathop{\text{sup}}_{z\in Q_k}\left| f_a\left( z \right) \right|\lesssim  \rho(z) ^{t/q}$. Hence $f_a \rightarrow 0$ uniformly on $Q_k$ as $a \rightarrow bT_B.$
	Due to Lemma \ref{lem zitui}, therefore $f_a \rightarrow 0$ uniformly on $Q_k$ as $\left| a \right|\rightarrow \infty .$
	Thus, the condition $(2)$ holds.
	\\
	
	$\left( 2 \right) \Rightarrow \left( 3 \right) 	$	
	The inequality
	\[
	\int_{D(a,r)}{\frac{\rho(a)^t}{\rho(z,a)^{(n+1+\alpha)\lambda+t}}}d\mu(z) \le \int_{T_B}{\frac{\rho(a)^t}{\rho(z,a)^{(n+1+\alpha)\lambda+t}}}d\mu(z) \rightarrow 0
	\]
	combined with Lemma \ref{lem dengjia} yields the desired result.
	\\
	
	$\left( 3 \right) \Rightarrow \left( 1 \right) 	$	
	Assuming $\{f_j\}$ is a bounded sequence in $A_{\alpha}^{p}\left( T_B \right)$ and uniformly converges to 0 on every compact subset of $T_B$, according to the definition of vanishing Carleson measure, it suffices to prove the following statement: $$
	\lim_{j\rightarrow \infty} \int_{T_B}{\left| f_j\left( z \right) \right|^qd\mu \left( z \right)}=0.
	$$
	
	For any $r>0$ we fix an r-lattice $\{a_j\}$ in the Bergman metric. Since $a_j \rightarrow \partial \widehat{T_B}$
	as $j \rightarrow \infty$, 	$$
	\lim_{j\rightarrow \infty} \frac{\mu \left( D\left( a_j,r \right) \right)}{\rho \left( a_j \right) ^{\left( n+1+\alpha \right) \lambda}}=0.
	$$
	
	Given $\varepsilon >0$, there is a positive integer $N_0$ such that $$\frac{\mu \left( D\left( a_j,r \right) \right)}{\rho \left( a_j \right) ^{\left( n+1+\alpha \right) \lambda}} < \varepsilon,\quad j\ge N_0.$$
	
	Therefore, the inequality to be proved can be divided into the following two parts:
	$$
	\begin{aligned}
	\int_{T_B}{\left| f_j\left( z \right) \right|^qd\mu \left( z \right)}&\lesssim \sum_{k=1}^{N_0-1}{\frac{\mu \left( D\left( a_k,r \right) \right)}{\rho \left( a_k \right) ^{\left( n+1+\alpha  \right) \lambda}}\left( \int_{D\left( a_k,2r \right)}{\left| f_j\left( z \right) \right|^pdV_{\alpha}\left( z \right)} \right) ^{\lambda}}\\
	&+\sum_{k=N_0}^{\infty}{\frac{\mu \left( D\left( a_k,r \right) \right)}{\rho \left( a_k \right) ^{\left( n+1+\alpha  \right) \lambda}}\left( \int_{D\left( a_k,2r \right)}{\left| f_j\left( z \right) \right|^pdV_{\alpha}\left( z \right)} \right) ^{\lambda}}.
	\end{aligned}
	$$

	The first inequality must converge to 0 as $j \rightarrow \infty$, since $\{f_j\}$ converges to 0 on compact subsets. The second inequality, due to $\lambda >1$, implies that
	\[
	\sum_{k=N_0}^{\infty}{\frac{\mu \left( D\left( a_k,r \right) \right)}{\rho \left( a_k \right) ^{\left( n+1+\alpha  \right) \lambda}}\left( \int_{D\left( a_k,2r \right)}{\left| f_j\left( z \right) \right|^pdV_{\alpha}\left( z \right)} \right) ^{\lambda}}\le \varepsilon N^{\lambda}\lVert f_j \rVert _{p}^{q}
	\]
	holds. 
	
	Combining the arbitrariness of $\varepsilon$, we conclude that $\mu$ is a vanishing $(\lambda,\alpha)$-Carleson measure.
\end{proof}

\begin{remark}
	The theorem in this section is a generalization of theorem $4.1$ and theorem $4.2$ in \cite{Li}. Theorem $\ref{xt1}$ and Theorem $\ref{xt2}$ in this section characterize the Carleson measure and the vanishing Carleson measure, respectively. They will characterize the boundedness and compactness of the operator in the subsequent theorems $\ref{dth 1}$ and $\ref{dth 2}$, respectively.
\end{remark}

\section{Boundedness and compactness of operators}
\ \ \ \
Now, we present the characterizations of the boundedness and compactness of Toeplitz operators and Berezin-type operators on Bergman spaces over tubular domains.
\\

First, we present two basic lemma \ref{lem xin 1} and \ref{lem xin 2}.
\\

Let's recall the definition of a separated sequence. A sequence of points $\{z_k\}$ in $T_B$ is called a separated sequence if there exists $\delta>0$, such that $\beta{(z, w)}>\delta$ for any $i\ne j$.

\begin{lemma}\label{lem xin 1}
	Given a separated sequence $\{z_k\}$ in $T_B$, with $n<t<s$, for any $z\in T_B$, the following holds.
	
	\[
	\sum_{k=1}^{\infty}{\frac{\rho \left( z_k \right) ^t}{|\rho \left( z,z_k \right) |^s}}\lesssim \rho \left( z \right) ^{t-s}, \ z\in T_B.
	\]
\end{lemma}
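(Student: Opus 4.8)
The plan is the standard ``discretization by a separated sequence'' argument: turn the sum into an average of an integral over disjoint Bergman balls, dominate it by a single integral over $T_B$, and evaluate that integral with Lemma \ref{lem jifendengshi}. First I would use separation: since $\beta(z_i,z_j)>\delta$ for $i\neq j$, the balls $D(z_k,\delta/2)$ are pairwise disjoint, because a common point $w$ would give $\beta(z_i,z_j)\le\beta(z_i,w)+\beta(w,z_j)<\delta$. Fix any $z\in T_B$. For $w\in D(z_k,\delta/2)$ we have $\beta(w,z_k)<\delta/2$, so Lemma \ref{lem dengjia} (applied with the two base points $z_k$ and $z$, and also with both base points equal to $z_k$ versus $w$) yields $|\rho(z,w)|\simeq|\rho(z,z_k)|$ and $\rho(w)\simeq\rho(z_k)$, with constants depending only on $\delta$. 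Moreover Lemma \ref{lem ceByuanpan} with $\alpha=0$ gives $V\bigl(D(z_k,\delta/2)\bigr)\simeq\rho(z_k)^{n+1}$.

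Combining these comparisons, for each $k$,
$$
\frac{\rho(z_k)^t}{|\rho(z,z_k)|^s}
\simeq \frac{1}{\rho(z_k)^{n+1}}\int_{D(z_k,\delta/2)}\frac{\rho(z_k)^t}{|\rho(z,z_k)|^s}\,dV(w)
\simeq \int_{D(z_k,\delta/2)}\frac{\rho(w)^{t-n-1}}{|\rho(z,w)|^s}\,dV(w),
$$
where the implied constants are independent of $k$ and $z$. Summing over $k$ and invoking the disjointness of the balls $D(z_k,\delta/2)$,
$$
\sum_{k=1}^{\infty}\frac{\rho(z_k)^t}{|\rho(z,z_k)|^s}
\lesssim \sum_{k=1}^{\infty}\int_{D(z_k,\delta/2)}\frac{\rho(w)^{t-n-1}}{|\rho(z,w)|^s}\,dV(w)
\le \int_{T_B}\frac{\rho(w)^{t-n-1}}{|\rho(z,w)|^s}\,dV(w).
$$

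Finally I would apply the special case of Lemma \ref{lem jifendengshi} with exponents $t'=t-n-1$ and $s'=s$. The hypotheses are exactly the standing assumptions: $t'>-1$ is equivalent to $t>n$, and $s'-t'>n+1$ is equivalent to $s>t$. Hence the last integral equals $C\,\rho(z)^{-(s-(t-n-1)-n-1)}=C\,\rho(z)^{t-s}$, which is the claimed bound. I do not expect a serious obstacle here; the only points requiring care are verifying that all implied constants depend solely on $\delta,n,t,s$ (and not on $z$ or on the particular separated sequence), and checking that the two inequalities $t>n$ and $s>t$ are precisely what make Lemma \ref{lem jifendengshi} applicable with the shifted exponent $t-n-1$ coming from the volume factor $\rho(z_k)^{n+1}$.
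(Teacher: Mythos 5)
Your argument is correct and is exactly the standard route the paper alludes to (the proof is omitted there, with only the hint that it follows from the Bergman decomposition combined with Lemma \ref{lem jifendengshi}): disjointness of the balls $D(z_k,\delta/2)$ from separation, the comparisons from Lemmas \ref{lem dengjia} and \ref{lem ceByuanpan} to convert each term into an integral over its ball, and then the integral identity with shifted exponent $t-n-1$, whose hypotheses you correctly check reduce to $t>n$ and $s>t$. No gaps; the constants indeed depend only on $\delta$, $n$, $t$, $s$.
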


Based on Bergman's decomposition (lemma \ref{lem duliang}) and combined with Lemma \ref{lem jifendengshi}, we can derive the result. We omit it's proof here.\\

The following Lemma \ref{lem xin 2} provides a good estimate for the norm of functions in the spaces $L_{\alpha}^{p}\left( T_B \right)$.

\begin{lemma}\label{lem xin 2}
	Consider a positive sequence $\{c_j\}$ and a separated sequence $\{a_j\}$ in $T_B$. Let $b \in T_B$ satisfy
	\[ b > n\max\left(1,\frac{1}{p}\right)+\frac{1+\alpha}{p}. \]
	Suppose $f$ is a measurable function on $T_B$ such that
	\[ |f(z)| \le \sum_{j=1}^{\infty}{\frac{c_j}{|\rho(z,a_j)|^b}}. \]
	Then, $f \in L_\alpha^p(T_B)$, and we have the estimate
	\[ \lVert f \rVert_{p,\alpha}^{p} \lesssim \sum_{j=1}^{\infty}{\frac{c_{j}^{p}}{\rho(a_j)^{bp-(n+1+\alpha)}}}. \]
\end{lemma}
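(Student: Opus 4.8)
The plan is to estimate $\lVert f\rVert_{p,\alpha}^p=\int_{T_B}|f(z)|^p\,dV_\alpha(z)$ directly from the pointwise majorant $|f(z)|\le\sum_j c_j|\rho(z,a_j)|^{-b}$, reducing everything to the two ``library'' estimates already at our disposal: the integral identity of Lemma~\ref{lem jifendengshi} and the summation estimate of Lemma~\ref{lem xin 1}. It is natural to split into the cases $0<p\le1$ and $p>1$, because the elementary inequality relating $(\sum_j t_j)^p$ and $\sum_j t_j^p$ points in opposite directions in the two regimes.

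\emph{The case $0<p\le1$.} Here subadditivity of $t\mapsto t^p$ gives $|f(z)|^p\le\sum_j c_j^p|\rho(z,a_j)|^{-bp}$ pointwise. Integrating against $dV_\alpha$ and applying the ``in particular'' form of Lemma~\ref{lem jifendengshi} term by term --- which is legitimate precisely because $bp>n+1+\alpha$, i.e.\ $b>(n+1+\alpha)/p=n\max(1,1/p)+(1+\alpha)/p$ in this range --- yields $\lVert f\rVert_{p,\alpha}^p\lesssim\sum_j c_j^p\,\rho(a_j)^{-(bp-(n+1+\alpha))}$, which is the assertion. Note that separation of $\{a_j\}$ plays no role here.

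\emph{The case $p>1$.} Now I would split the exponent as $b=b_1+b_2$ with $b_1,b_2>0$ and insert an artificial weight $\rho(a_j)^{\pm\gamma}$:
\[
\sum_j\frac{c_j}{|\rho(z,a_j)|^{b}}=\sum_j\frac{c_j\,\rho(a_j)^{-\gamma}}{|\rho(z,a_j)|^{b_1}}\cdot\frac{\rho(a_j)^{\gamma}}{|\rho(z,a_j)|^{b_2}},
\]
and apply H\"older's inequality with exponents $p$ and $p'=p/(p-1)$, the first factor (the one carrying $c_j$) getting exponent $p$. By Lemma~\ref{lem xin 1}, applied with $t=\gamma p'$ and $s=b_2p'$ and using that $\{a_j\}$ is separated, the second factor is $\bigl(\sum_j\rho(a_j)^{\gamma p'}|\rho(z,a_j)|^{-b_2p'}\bigr)^{p/p'}\lesssim\rho(z)^{(\gamma-b_2)p}$ provided $n<\gamma p'<b_2p'$. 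Raising to the $p$-th power and integrating against $dV_\alpha$, Fubini reduces the remaining sum to $\sum_j c_j^p\,\rho(a_j)^{-\gamma p}\int_{T_B}\rho(z)^{(\gamma-b_2)p+\alpha}|\rho(z,a_j)|^{-b_1p}\,dV(z)$, and each integral is evaluated by Lemma~\ref{lem jifendengshi}. Collecting the powers of $\rho(a_j)$, the exponent telescopes to $b_1p+b_2p-(n+1+\alpha)=bp-(n+1+\alpha)$, independent of the auxiliary parameters, which is exactly the claimed bound.

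\emph{The main obstacle.} The only delicate point is verifying that $\gamma,b_1,b_2$ can be chosen to satisfy \emph{simultaneously} all the constraints imposed by the two lemmas: $b_1+b_2=b$ with $b_1,b_2>0$; from Lemma~\ref{lem xin 1}, $n/p'<\gamma<b_2$; and from Lemma~\ref{lem jifendengshi}, $(\gamma-b_2)p+\alpha>-1$ and $b_1p-(\gamma-b_2)p-\alpha>n+1$. Rewriting, one needs $\gamma\in\bigl(n/p',\,b-(n+1+\alpha)/p\bigr)$ and then $b_2\in\bigl(\gamma,\,\gamma+(1+\alpha)/p\bigr)$, with $b_1=b-b_2$. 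The first interval is non-empty exactly because for $p>1$ the hypothesis reads $b>n+(1+\alpha)/p=n/p'+(n+1+\alpha)/p$; the second is non-empty because $\alpha>-1$; and one checks $\gamma+(1+\alpha)/p<b$, so that $b_1>0$, and $b_2>\gamma>n/p'\ge0$, so that $b_2>0$. I would state this parameter choice at the outset and then run the three-line estimate above; the remainder is routine bookkeeping with exponents.
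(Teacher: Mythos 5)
Your proposal is correct and follows essentially the same route as the paper: subadditivity of $t\mapsto t^p$ plus a term-by-term application of Lemma \ref{lem jifendengshi} for $0<p\le 1$, and for $p>1$ a H\"older splitting with an inserted weight in $\rho(a_j)$, Lemma \ref{lem xin 1} to absorb the dual factor into a power of $\rho(z)$, and Lemma \ref{lem jifendengshi} to finish. The paper simply fixes one admissible choice of your parameters (in your notation, $b_1=b/p$, $b_2=b/p'$, $\gamma=(b-(1+\alpha)/p)/p'$), which your interval analysis correctly shows is available under the stated hypothesis on $b$.
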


\begin{proof}
	
	When $0 < p \leq 1$, we have the inequality \[   |f\left( z \right) |^p\le \sum_{j=1}^{\infty}{\frac{c_j}{|\rho \left( z,a_j \right) |^{bp}}}.   \]  By Lemma \ref{lem jifendengshi}, it follows that \[   \begin{aligned}
	\int_{T_B}{\left| f\left( z \right) \right|^pdV_{\alpha}\left( z \right)}&\le \sum_{j=1}^{\infty}{c_{j}^{p}}\int_{T_B}{\frac{1}{|\rho \left( z,a_j \right) |^{bp}}}dV_{\alpha}\left( z \right) 
	\\
	&\lesssim \sum_{j=1}^{\infty}{\frac{c_{j}^{p}}{\rho \left( a_j \right) ^{bp-\left( n+1+\alpha \right)}}}.
	\end{aligned}  \] 
	
	For $p>1$, we choose $b > n + \frac{1+\alpha}{p}$. Let $p'$ be the conjugate exponent of $p$, such that $\frac{1}{p}+\frac{1}{p'}=1$. By Hölder's inequality and Lemma \ref{lem xin 1}, we obtain
	$$
	\begin{aligned}
	|f\left( z \right) |^p&=\left( \sum_{j=1}^{\infty}{\frac{c_j}{\left| \rho \left( z,a_j \right) \right|^b}} \right) ^p\\
	&\le \left( \sum_{j=1}^{\infty}{\frac{\rho \left( a_j \right) ^{b-\left( 1+\alpha \right) /p}}{\left| \rho \left( z,a_j \right) \right|^b}} \right) ^{p-1}\left( \sum_{j=1}^{\infty}{\frac{c_{j}^{p}\rho \left( a_j \right) ^{b\left( 1-p \right) +\left( 1+\alpha \right) /p'}}{\left| \rho \left( z,a_j \right) \right|^b}} \right)\\
	&\lesssim {\rho (z)} ^{-\left( 1+\alpha \right) /p'}\left( \sum_{j=1}^{\infty}{\frac{c_{j}^{p}\rho \left( a_j \right) ^{b\left( 1-p \right) +\left( 1+\alpha \right) /p'}}{\left| \rho \left( z,a_j \right) \right|^b}} \right) .
	\end{aligned}
	$$
	
	This implies
	$$
	\lVert f \rVert _{p,\alpha}^{p}\lesssim \sum_{j=1}^{\infty}{c_{j}^{p}}\rho \left( a_j \right) ^{b\left( 1-p \right) +\left( 1+\alpha \right) /p'}\int_{T_B}{\frac{\rho \left( z \right) ^{-\left( 1+\alpha \right) /p'}}{\left| \rho \left( z,a_j \right) \right|^b}}dV_{\alpha}\left( z \right).
	$$

	Since $\alpha-(1+\alpha)/p^{\prime}=(1+\alpha)/p-1>-1$, and
	$$
	b-(n+1+\alpha)+(1+\alpha) / p^{\prime}=s-n-(1+\alpha) / p>0,
	$$
	we have completed the proof of the lemma according to Lemma \ref{lem jifendengshi}.
\end{proof}\\

Due to the above lemmas \ref{lem xin 1} and \ref{lem xin 2}, combine with the characterization of Carleson measure. Now, we can prove the first main theorem \ref{dth 1} of this paper. The theorem mainly states that under the condition $p_1 \le p_2 $, the boundedness of Toeplitz operators $T_\mu^\xi$, Berezin-type operators $B_{\mu}^{\xi}$, and $\mu$ being a $(\lambda, \gamma)$-Carleson measure are equivalent, and it further states that their norms are mutually controlled.

\begin{theorem}\label{dth 1}
	Let $0<p_1 \le p_2 <\infty$, $-1<\alpha_1,\alpha_2,\xi<\infty$ and $\mu\in \mathcal{M}_+$. Suppose that $$
	n+1+\xi>n\max \left( 1,\frac{1}{p_i} \right) +\frac{1+\alpha _i}{p_i}, \quad  i=1,2.
	$$
	Let
	$$
	\lambda=1+\frac{1}{p_1}-\frac{1}{p_2}, \ \gamma=\frac{1}{\lambda}\left(\xi+\frac{\alpha_1}{p_1}-\frac{\alpha_2}{p_2}\right) .
	$$
	Then the following statements are equivalent:
\\
		
		$(1)$ $B_{\mu}^{\xi}$ is bounded from $A_{\alpha_{1}}^{p_{1}}(T_B)$ to $L_{\alpha_{2}}^{p_{2}}(T_B).$ 
		
		$(2)$ $T_\mu^\xi$ is bounded from $A_{\alpha_1}^{p_1}(T_B)$ to $A_{\alpha_2}^{p_2}(T_B)$.
		
		$(3)$ The measure $\mu$ is a $(\lambda, \gamma)$-Carleson measure.

	Moreover, we have	
	$$
	\|B_{\mu}^{\xi}\|_{A_{\alpha_{1}}^{p_{1}}\to L_{\alpha_{2}}^{p_{2}}}\simeq\|T_{\mu}^{\xi}\|_{A_{\alpha_{1}}^{p_{1}}\to A_{\alpha_{2}}^{p_{2}}}\simeq\|\mu\|_{\lambda,\gamma}.
	$$
\end{theorem}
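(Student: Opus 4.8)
The plan is to establish the cycle of implications $(3)\Rightarrow(1)\Rightarrow(2)\Rightarrow(3)$, arranging each step so that it carries an explicit control of the relevant ``norm''; the asserted norm equivalences then follow by chaining the three inequalities $\|\mu\|_{\lambda,\gamma}\lesssim\|T_\mu^\xi\|_{A_{\alpha_1}^{p_1}\to A_{\alpha_2}^{p_2}}\le\|B_\mu^\xi\|_{A_{\alpha_1}^{p_1}\to L_{\alpha_2}^{p_2}}\lesssim\|\mu\|_{\lambda,\gamma}$. The implication $(1)\Rightarrow(2)$ is immediate from the pointwise bound $|T_\mu^\xi f|\le B_\mu^\xi f$ recorded in Section~2 (after noting, by differentiation under the integral sign, that $T_\mu^\xi f$ is holomorphic), and it gives $\|T_\mu^\xi\|\le\|B_\mu^\xi\|$.

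For $(3)\Rightarrow(1)$ I would fix an $r$-lattice $\{a_k\}$ as in Lemma~\ref{lem duliang} (in particular a separated sequence) and split $B_\mu^\xi f(z)\le\sum_k\int_{D(a_k,r)}|f(w)|\,|\rho(z,w)|^{-(n+1+\xi)}\,d\mu(w)$. On each $D(a_k,r)$, Lemma~\ref{lem dengjia} replaces $|\rho(z,w)|$ by $|\rho(z,a_k)|$, and bounding $|f(w)|$ by $M_k:=\sup_{D(a_k,r)}|f|$ together with the Carleson estimate $\mu(D(a_k,r))\lesssim\|\mu\|_{\lambda,\gamma}\rho(a_k)^{(n+1+\gamma)\lambda}$ (Theorem~\ref{xt1}) gives $B_\mu^\xi f(z)\lesssim\|\mu\|_{\lambda,\gamma}\sum_k c_k|\rho(z,a_k)|^{-(n+1+\xi)}$ with $c_k=M_k\rho(a_k)^{(n+1+\gamma)\lambda}$. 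Now Lemma~\ref{lem xin 2} applied with $b=n+1+\xi$, $p=p_2$, $\alpha=\alpha_2$ (the hypothesis for $i=2$ being exactly the admissibility condition) yields $\|B_\mu^\xi f\|_{p_2,\alpha_2}^{p_2}\lesssim\|\mu\|_{\lambda,\gamma}^{p_2}\sum_k c_k^{p_2}\rho(a_k)^{-(n+1+\xi)p_2+(n+1+\alpha_2)}$. The definitions of $\lambda$ and $\gamma$ make the exponent of $\rho(a_k)$ collapse to $(n+1+\alpha_1)p_2/p_1$; then the sub-mean-value bound $M_k^{p_1}\lesssim\rho(a_k)^{-(n+1+\alpha_1)}\int_{D(a_k,2r)}|f|^{p_1}\,dV_{\alpha_1}$ (Lemma~\ref{lem ci tiao he fang da}), the elementary inequality $\sum_k x_k^{p_2/p_1}\le(\sum_k x_k)^{p_2/p_1}$ valid since $p_2\ge p_1$, and the finite-overlap property Lemma~\ref{lem duliang}(3) complete the estimate, giving $\|B_\mu^\xi\|\lesssim\|\mu\|_{\lambda,\gamma}$.

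The delicate step is $(2)\Rightarrow(3)$. By Theorem~\ref{xt1} it suffices to bound $\mu(D(a,r))\lesssim\|T_\mu^\xi\|\,\rho(a)^{(n+1+\gamma)\lambda}$ for a fixed $0<r<1$. I would test $T_\mu^\xi$ on $f_a(w)=\rho(a)^{\tau}\rho(w,a)^{-(n+1+\xi)}$ with $\tau=(n+1+\xi)-(n+1+\alpha_1)/p_1$; Lemma~\ref{lem jifendengshi}, together with the hypothesis for $i=1$ (which guarantees $(n+1+\xi)p_1-\alpha_1>n+1$), gives $\|f_a\|_{p_1,\alpha_1}\simeq1$. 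The crucial observation is that evaluating at the center kills the oscillation of the kernel: since $\overline{\rho(w,a)}=\rho(a,w)$, one gets $T_\mu^\xi f_a(a)=\rho(a)^{\tau}\int_{T_B}|\rho(w,a)|^{-2(n+1+\xi)}\,d\mu(w)\ge 0$, so restricting the integral to $D(a,r)$ and using Lemma~\ref{lem dengjia} yields $T_\mu^\xi f_a(a)\gtrsim\rho(a)^{\tau-2(n+1+\xi)}\mu(D(a,r))$. On the other hand $T_\mu^\xi f_a\in A_{\alpha_2}^{p_2}$ with $\|T_\mu^\xi f_a\|_{p_2,\alpha_2}\lesssim\|T_\mu^\xi\|$, so Lemma~\ref{lem ci tiao he fang da} gives $|T_\mu^\xi f_a(a)|\lesssim\|T_\mu^\xi\|\,\rho(a)^{-(n+1+\alpha_2)/p_2}$. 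Comparing the two bounds and simplifying via the identity $(n+1+\gamma)\lambda=(n+1+\xi)+(n+1+\alpha_1)/p_1-(n+1+\alpha_2)/p_2$ gives precisely $\mu(D(a,r))\lesssim\|T_\mu^\xi\|\,\rho(a)^{(n+1+\gamma)\lambda}$, hence $(3)$ with $\|\mu\|_{\lambda,\gamma}\lesssim\|T_\mu^\xi\|$.

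I expect the main obstacle to be this $(2)\Rightarrow(3)$ step: selecting the exponent $\tau$ so that $f_a$ is $A^{p_1}_{\alpha_1}$-normalized and, above all, exploiting the positivity of the integrand at $z=a$ so that a genuine lower bound in terms of $\mu(D(a,r))$ survives despite the oscillatory kernel $\rho(z,w)^{-(n+1+\xi)}$. Beyond that, the chief source of difficulty is purely bookkeeping: verifying that the powers of $\rho(a_k)$ (resp.\ $\rho(a)$) collapse exactly to the exponents dictated by the definitions of $\lambda$ and $\gamma$.
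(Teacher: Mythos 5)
Your proposal is correct and follows essentially the same route as the paper: $(1)\Rightarrow(2)$ from the pointwise domination $|T_\mu^\xi f|\le B_\mu^\xi f$, $(2)\Rightarrow(3)$ by testing on the kernel-type function $\rho(\cdot,a)^{-(n+1+\xi)}$ (the paper carries the normalization through $\|f_a\|_{p_1,\alpha_1}$ rather than building $\rho(a)^\tau$ into the test function, but the computation is identical, including the cancellation of the kernel's oscillation at $z=a$) and the pointwise growth estimate from Lemma \ref{lem ci tiao he fang da}, and $(3)\Rightarrow(1)$ by the lattice decomposition, Lemma \ref{lem dengjia}, and Lemma \ref{lem xin 2} with exactly the exponent bookkeeping you describe. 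Your added remark that one should check $T_\mu^\xi f$ is holomorphic in $(1)\Rightarrow(2)$ is a small point the paper glosses over, but it does not change the argument.
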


\begin{proof}
	
	$\left( 1 \right) \Rightarrow \left( 2 \right) 	$
	The inequality $\left| T_{\mu}^{\xi}f \right|\le B_{\mu}^{\xi}f$ implies that $(2)$ holds trivially.
	\\
	
	$\left( 2 \right) \Rightarrow \left( 3 \right) 	$	
	Fix $a \in T_B$ and define $f_a\left( z \right) =\rho \left( z,a \right) ^{-\left( n+1+\xi \right)}$. Given the condition $(n+1+\xi)p_1>n+1+\alpha_1$, according to Lemma \ref{lem jifendengshi} we know that $f_a \in A_{\alpha_{1}}^{p_{1}}(T_B)$ with $$\lVert f_a \rVert _{p_1,\alpha _1}^{p_1}\lesssim \rho \left( a \right) ^{\left( n+1+\alpha _1 \right) -\left( n+1+\xi \right) p_1}
	.$$
	
	Since $$
	\begin{aligned}
	T_{\mu}^{\xi}f_a\left( z \right) &=\int_{T_B}{\frac{f_a\left( w \right)}{\rho \left( z,w \right) ^{n+1+\xi}}d\mu \left( w \right)}
	\\
	&=\int_{T_B}{\frac{d\mu \left( w \right)}{\rho \left( z,w \right) ^{n+1+\xi}\rho \left( w,a \right) ^{n+1+\xi}}},
	\end{aligned}
	$$
	we have $$
	T_{\mu}^{\xi}f_z\left( z \right) =\int_{T_B}{\frac{d\mu \left( w \right)}{\left| \rho \left( z,w \right) \right|^{2\left( n+1+\xi \right)}}}\gtrsim \frac{\mu \left( D\left( z,r \right) \right)}{\rho \left( z \right) ^{2\left( n+1+\xi  \right)}}
	\\
	.
	$$

	Moreover, using the pointwise estimate for functions in Bergman spaces and the boundedness of the Toeplitz operator $T_\mu^\xi$, 
	we obtain
	$$
	\begin{aligned}
	T_{\mu}^{\xi}f_z\left( z \right) =\left| T_{\mu}^{\xi}f_z\left( z \right) \right|&\le \frac{\lVert T_{\mu}^{\xi}f_z \rVert _{p_2,\alpha _2}}{\rho \left( z \right) ^{\left( n+1+\alpha _2 \right) /p_2}}
	\\
	&\le \lVert T_{\mu}^{\xi} \rVert \lVert f_z \rVert _{p_1,a_1}\rho \left( z \right) ^{-\left( n+1+\alpha _2 \right) /p_2}
	\\
	&\lesssim \lVert T_{\mu}^{\xi} \rVert \rho \left( z \right) ^{\left( n+1+\alpha _1 \right) /p_1-\left( n+1+\alpha _2 \right) /p_2-\left( n+1+\xi \right)}.
	\\
	\end{aligned}
	$$
	
	Hence, we have
	$$
	\begin{aligned}
	\mu \left( D\left( z,r \right) \right) &\lesssim \lVert T_{\mu}^{\xi} \rVert \rho \left( z \right) ^{\left( n+1+\xi \right) +\left( n+1+\alpha _1 \right) /p_1-\left( n+1+\alpha _2 \right) /p_2}
	\\
	&=\rho \left( z \right) ^{\left( n+1+\gamma \right) \lambda}.
	\end{aligned}
	$$
	
	This implies, by Theorem \ref{xt1}, that $\mu$ is a $(\lambda,\gamma)$-Carleson measure with $$
	\lVert \mu \rVert _{\lambda ,\gamma}\lesssim \lVert T_{\mu}^{\xi} \rVert .
	$$
	\\
	$\left( 3 \right) \Rightarrow \left( 1 \right) 	$		
	Suppose that $\mu$ is a $(\lambda,\gamma)$-Carleson measure. Since the condition $0<p_1\le p_2<\infty$ implies that $\lambda >1$ and $p_2/p_1$$\ge1$.  According to Lemma \ref{lem dengjia}, we know that when $w \in D(a_j,r)$, we have $\left| \rho \left( z,w \right) \right|\simeq \left| \rho \left( z,a_j \right) \right|$. 
	
	Therefore, we obtain $$
	\begin{aligned}
	\left( B_{\mu}^{\xi}f \right) \left( z \right) &=\int_{T_B}{\frac{|f\left( w \right) |}{\left| \rho \left( z,w \right) \right|^{n+1+\xi}}}\text{d}\mu \left( w \right) 
	\\
	&\lesssim \sum_{j=1}^{\infty}{\int_{D\left( a_j,r \right)}{\frac{|f\left( w \right) |}{\left| \rho \left( z,w \right) \right|^{n+1+\xi}}}}d\mu \left( w \right) 
	\\
	&\lesssim \sum_{j=1}^{\infty}{\left( \mathop{\text{sup}}_{w\in D\left( a_j,r \right)}\left| f\left( w \right) \right| \right) \int_{D\left( a_j,r \right)}{\frac{1}{\left| \rho \left( z,w \right) \right|^{n+1+\xi}}}}d\mu \left( w \right) 
	\\
	&\lesssim \sum_{j=1}^{\infty}{\left( \mathop{\text{sup}}_{w\in D\left( a_j,r \right)}\left| f\left( w \right) \right| \right) \frac{\mu \left( D_j \right)}{\left| \rho \left( z,a_j \right) \right|^{n+1+\xi}}}.
	\end{aligned}
	$$
	
	According to Lemma \ref{lem ci tiao he fang da}, we have $$
	\left| f\left( w \right) \right|^{p_1}\le \frac{C}{\rho \left( a_j \right) ^{n+1+\alpha _1}}\int_{D\left( a_j,2r \right)}{\left| f\left( w \right) \right|^{p_1}dV_{\alpha _1}\left( w \right)}
	$$ holds for any $w \in D(a_j,r)$.\\	
	
	We get that 
	$$
	\left| \left( B_{\mu}^{\xi}f \right) \left( z \right) \right|\lesssim \sum_{j=1}^{\infty}{\left( \frac{1}{\rho \left( a_j \right) ^{n+1+\alpha _1}}\int_{D\left( a_j,2r \right)}{\left| f\left( w \right) \right|^{p_1}dV_{\alpha _1}\left( w \right)} \right) ^{1/p_1}}\frac{\mu \left( D\left( a_j,r \right) \right)}{\left| \rho \left( z,a_j \right) \right|^{n+1+\xi }}.
	$$
	
	According to Lemma \ref{lem xin 2}, we can obtain $$
	\lVert B_{\mu}^{\xi}f \rVert _{p_2,\alpha _2}^{p_2}\lesssim \sum_{j=1}^{\infty}{\left( \int_{D\left( a_j,2r \right)}{\left| f\left( w \right) \right|^{p_1}dV_{\alpha _1}\left( w \right)} \right) ^{p_2/p_1}}\left( \frac{\mu \left( D\left( a_j,r \right) \right)}{\left| \rho \left( a_j \right) \right|^{\left( n+1+\gamma  \right) \lambda}} \right) ^{p_2}.
	$$
	
	it follows Theorem \ref{xt1}, we have 
	$$
	\begin{aligned}
	\lVert B_{\mu}^{\xi}f \rVert _{p_2,\alpha _2}^{p_2}&\lesssim \lVert \mu \rVert _{\lambda ,\gamma}^{p_2}\sum_{j=1}^{\infty}{\left( \int_{D\left( a_j,2r \right)}{\left| f\left( z \right) \right|^{p_1}dV_{\alpha _1}\left( z \right)} \right) ^{p_2/p_1}}
	\\
	&\lesssim \lVert \mu \rVert _{\lambda ,\gamma}^{p_2}\left( \sum_{j=1}^{\infty}{\int_{D\left( a_j,2r \right)}{\left| f\left( z \right) \right|^{p_1}dV_{\alpha _1}\left( z \right)}} \right) ^{p_2/p_1}
	\\
	&\lesssim \lVert \mu \rVert _{\lambda ,\gamma}^{p_2}\lVert f \rVert _{p_1,\alpha _1}^{p_2}.
	\end{aligned}
	$$

	Hence, $B_\mu^\xi$ is bounded from $A_{\alpha_{1}}^{p_{1}}(T_B)$ to $L_{\alpha_{2}}^{p_{2}}(T_B).$ 
	
\end{proof}

Next, by combining the characterization of the vanishing Carleson measures in Theorem \ref{xt2}, we prove the second main theorem \ref{dth 2} of this paper. Before that, we present a lemma \ref{lem jin suan zi chong fen tiao jian}, which provides a sufficient condition for an operator to be compact.

\begin{lemma}\label{lem jin suan zi chong fen tiao jian}
	Let $0 < p_1, p_2 < \infty$,\ $-1 < \alpha_1,  \alpha_2, \xi < \infty$ and $\mu \in \mathcal{M}_+$. Assume that $T_\mu^\xi: A_{\alpha_1}^{p_1}(T_B) \rightarrow A_{\alpha_2}^{p_2}(T_B)$ is a bounded operator. Suppose that for every bounded sequence $\{f_k\}$ in $A_{\alpha_1}^{p_1}(T_B)$ such that $f_k \rightarrow 0$ uniformly on every compact subset of $T_B$ as $k \rightarrow \infty$, we have
	$$
	\lim _{k \rightarrow \infty}\left\|T_\mu^\xi f_k\right\|_{p_2, \alpha_2}=0 .
	$$
	Then, $T_\mu^\xi$ is compact from $A_{\alpha_1}^{p_1}(T_B)$ to $L_{\alpha_2}^{p_2}(T_B)$.
\end{lemma}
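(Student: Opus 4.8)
The plan is to show that $T_\mu^\xi$ maps the closed unit ball $\mathcal{B}$ of $A_{\alpha_1}^{p_1}(T_B)$ to a relatively compact subset of $A_{\alpha_2}^{p_2}(T_B)$, using the hypothesis as the bridge. First I would recall the standard fact that a bounded subset of a Bergman space $A_{\alpha_1}^{p_1}(T_B)$ is a \emph{normal family}: by Lemma \ref{lem ci tiao he fang da}, pointwise values are dominated by the norm times $\rho(z)^{-(n+1+\alpha_1)/p_1}$, so any norm-bounded sequence is uniformly bounded on compact subsets of $T_B$, hence (by Montel's theorem for holomorphic functions of several variables) has a subsequence converging uniformly on compact subsets to some holomorphic limit.

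Then, given an arbitrary sequence $\{g_k\}\subset \mathcal{B}$, I would extract such a locally uniformly convergent subsequence $\{g_{k_j}\}$ with limit $g$; since each $g_{k_j}$ has norm $\le 1$, Fatou's lemma gives $\|g\|_{p_1,\alpha_1}\le 1$, so $g\in A_{\alpha_1}^{p_1}(T_B)$ as well. Set $f_j := g_{k_j} - g$. Then $\{f_j\}$ is bounded in $A_{\alpha_1}^{p_1}(T_B)$ (norm $\le 2$) and $f_j\to 0$ uniformly on every compact subset of $T_B$. Applying the hypothesis of the lemma to $\{f_j\}$ yields
$$
\|T_\mu^\xi g_{k_j} - T_\mu^\xi g\|_{p_2,\alpha_2} = \|T_\mu^\xi f_j\|_{p_2,\alpha_2} \longrightarrow 0,
$$
using linearity of $T_\mu^\xi$. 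Thus $\{T_\mu^\xi g_{k_j}\}$ converges in the norm of $A_{\alpha_2}^{p_2}(T_B)$. Since every sequence in $T_\mu^\xi(\mathcal{B})$ has a norm-convergent subsequence, $T_\mu^\xi(\mathcal{B})$ is relatively compact, i.e.\ $T_\mu^\xi$ is compact; composing with the inclusion $A_{\alpha_2}^{p_2}(T_B)\hookrightarrow L_{\alpha_2}^{p_2}(T_B)$ gives the stated conclusion.

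The main obstacle — and the only genuinely nontrivial point — is justifying that the limit function $g$ lies in $A_{\alpha_1}^{p_1}(T_B)$ and, more importantly, that $f_j = g_{k_j}-g$ is still \emph{holomorphic} and bounded there so that the hypothesis applies; this is handled by the normal-family/Montel argument together with Fatou's lemma, both of which need the pointwise growth estimate of Lemma \ref{lem ci tiao he fang da}. One should also note in passing that because we only need \emph{some} convergent subsequence of each sequence, no diagonal argument or uniqueness of the limit is required; the subsequential characterization of relative compactness in the metric space $A_{\alpha_2}^{p_2}(T_B)$ suffices. A minor technical remark: when $0<p_1<1$ the space is only a complete metric space rather than a Banach space, but compactness still makes sense as relative compactness of the image of the unit ball, so the argument goes through verbatim.
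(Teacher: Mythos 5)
Your proposal is correct and follows essentially the same route as the paper's proof: extract a locally uniformly convergent subsequence via the pointwise estimate of Lemma \ref{lem ci tiao he fang da} and Montel's theorem, place the limit in $A_{\alpha_1}^{p_1}(T_B)$ by Fatou's lemma, and apply the hypothesis to the difference sequence to get norm convergence of the images. The only cosmetic difference is that you invoke linearity of $T_\mu^\xi$ directly where the paper states an inequality, and you add the (correct) remarks about the quasi-Banach case $0<p_1<1$ and the inclusion into $L_{\alpha_2}^{p_2}(T_B)$.
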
	

\begin{proof}
	Let $\left\{f_k\right\}$ be a bounded sequence in $A_{\alpha_1}^{p_1}(T_B)$. Thus, there exists a constant $M>0$ such that $$\left\|f_k\right\|_{p_1, \alpha_1} \leq M$$ for all $k \geq 1$. 
	
	By Lemma \ref{lem ci tiao he fang da}, $\left\{f_k\right\}$ is uniformly bounded on every compact subset of $T_B$. 
	
	By Montel's Theorem, there exists a subsequence of $\left\{f_{k_j}\right\}, j=1,2,3 \ldots$, such that $f_{k_j} \rightarrow f$ uniformly on every compact subset of $T_B$ for some holomorphic function $f$ on $T_B$, as $j \rightarrow \infty$. 
	
	Applying Fatou's Lemma we have
	$$
	\begin{aligned}
	\int_{T_B}|f(z)|^{p_1} \mathrm{~d} V_{\alpha_1}(z) & =\int_{T_B} \lim _{j \rightarrow \infty}\left|f_{k_j}(z)\right|^{p_1} \mathrm{~d} V_{\alpha_1}(z) \\
	& \leq \lim _{j \rightarrow \infty} \int_{T_B}\left|f_{k_j}(z)\right|^{p_1} \mathrm{~d} V_{\alpha_1}(z) \\
	& \leq \lim _{j \rightarrow \infty}\left\|f_{k_j}\right\|_{p_1, \alpha_1}^{p_1} \leq M .
	\end{aligned}
	$$
	
	Thus, we have $$f \in A_{\alpha_1}^{p_1}(T_B).$$ 
	
	Therefore, $f_{k_j}-f \rightarrow 0$ uniformly on every compact subset of $T_B$ as $j \rightarrow \infty$. 
	
	Given our assumption, we have
	$$
	\lim _{j \rightarrow \infty}\left\|T_\mu^\xi\left(f_{k_j}-f\right)\right\|_{p_2, \alpha_2}=0 .
	$$
	
	It is straightforward to verify that
	$$
	\left\|T_\mu^\xi f_{k_j}-T_\mu^\xi f\right\|_{p_2, \alpha_2} \leq\left\|T_\mu^\xi\left(f_{k_j}-f\right)\right\|_{p_2, \alpha_2} .
	$$

	From this inequality, we obtain that
	$$
	\lim _{j \rightarrow \infty}\left\|T_\mu^\xi f_{k_j}-T_\mu^\xi f\right\|_{p_2, \alpha_2}=0,
	$$
	implying that $T_\mu^\xi f \in L_{\alpha_2}^{p_2}(T_B)$. 

	This implies that a subsequence of $\{T_{\mu}^{\xi}f_{k_j}\}$ converges in $L_{\alpha_2}^{p_2}(T_B)$, stating the compactness of $T_\mu^\xi$ from $A_{\alpha_1}^{p_1}(T_B)$ to $L_{\alpha_2}^{p_2}(T_B)$.
\end{proof}\\

The following Lemma \ref{lem ruo shou lian de deng jia tiao jian} is a well-known result, and its proof can also be found in \cite{Li}.

\begin{lemma}\label{lem ruo shou lian de deng jia tiao jian}
	Suppose $\left\{ f_j \right\}$ is a sequence in $A_{\alpha}^{p}\left( T_B \right)$ with $1<p<\infty.$ Then $f_j\rightarrow 0$ weakly in $A_{\alpha}^{p}\left( T_B \right)$ as $j\rightarrow \infty$ if and only if $\left\{ f_j \right\}$ is bounded in $A_{\alpha}^{p}\left( T_B \right)$ and converges to $0$ uniformly on each compact subset of $T_B$.
\end{lemma}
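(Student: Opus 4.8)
The plan is to prove both implications using the machinery already set up in the paper, in particular Lemma \ref{lem ci tiao he fang da} (the pointwise estimate for Bergman functions), Lemma \ref{lem zai shng he de da xiao} (the $A_\alpha^p$-norm of the reproducing kernel), and the reflexivity of $A_\alpha^p(T_B)$ for $1<p<\infty$, together with the duality pairing $\langle f,g\rangle_\alpha=\int_{T_B}f\,\bar g\,dV_\alpha$ identifying $(A_\alpha^p)^*$ with $A_\alpha^{p'}$ via the Bergman projection.

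For the forward direction, assume $f_j\to 0$ weakly. Since weakly convergent sequences in a Banach space are norm-bounded (uniform boundedness principle), $\{f_j\}$ is bounded in $A_\alpha^p$. To get uniform convergence to $0$ on a compact set $E\subset T_B$, I would fix $z\in E$ and use that point evaluation $f\mapsto f(z)$ is a bounded linear functional on $A_\alpha^p$ — this is exactly Lemma \ref{lem ci tiao he fang da} — realized by the reproducing kernel, so $f_j(z)=\langle f_j, K_{\alpha,z}\rangle_\alpha \to 0$ for each fixed $z$. To upgrade pointwise convergence to uniform convergence on $E$, combine the pointwise bound $|f_j(z)-f_k(z)|^p\lesssim \rho(z)^{-(n+\alpha+1)}\int_{D(z,r)}|f_j-f_k|^p\,dV_\alpha$ with the fact that $\rho(z)$ is bounded below on the compact set $E$ and the Bergman balls $D(z,r)$ for $z\in E$ all sit inside a fixed compact set; equicontinuity of $\{f_j\}$ on $E$ (from the Cauchy-estimate consequence of uniform local boundedness) plus pointwise convergence then gives uniform convergence on $E$ by a standard Arzelà--Ascoli-type argument.

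For the reverse direction, assume $\{f_j\}$ is bounded by $M$ in $A_\alpha^p$ and $f_j\to 0$ uniformly on compact subsets. I must show $\langle f_j,g\rangle_\alpha\to 0$ for every $g\in A_\alpha^{p'}(T_B)$. The key step is a density/splitting argument: finite linear combinations of reproducing kernels $K_{\alpha,w}$ (or, equivalently, functions in some dense subclass) are dense in $A_\alpha^{p'}$, and for such a combination $g=\sum c_i K_{\alpha,w_i}$ one has $\langle f_j,g\rangle_\alpha = \sum \bar c_i f_j(w_i)\to 0$ since the $w_i$ form a finite (hence compact) set. For general $g\in A_\alpha^{p'}$, given $\varepsilon>0$ pick such a combination $g_\varepsilon$ with $\|g-g_\varepsilon\|_{p',\alpha}<\varepsilon$; then $|\langle f_j,g\rangle_\alpha|\le |\langle f_j,g-g_\varepsilon\rangle_\alpha| + |\langle f_j,g_\varepsilon\rangle_\alpha| \le M\varepsilon + |\langle f_j,g_\varepsilon\rangle_\alpha|$, and letting $j\to\infty$ then $\varepsilon\to 0$ finishes the proof.

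The main obstacle I anticipate is justifying the density of finite linear combinations of reproducing kernels in $A_\alpha^{p'}(T_B)$ (equivalently, that $g\perp K_{\alpha,w}$ for all $w$ forces $g=0$, which is immediate from $g(w)=\langle g,K_{\alpha,w}\rangle_\alpha$, but density in the $A_\alpha^{p'}$-norm requires a bit more care on an unbounded domain) and, in the forward direction, making the Arzelà--Ascoli step fully rigorous — namely deriving equicontinuity of the family $\{f_j\}$ on compacta from the uniform $A_\alpha^p$-bound via interior Cauchy estimates. Both are standard but deserve explicit mention; everything else reduces to the cited lemmas and elementary functional analysis. Since the paper attributes this lemma to \cite{Li}, I would keep the write-up brief and refer there for the routine details.
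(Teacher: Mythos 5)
The paper does not actually prove this lemma: it is stated as well known, with the proof deferred to \cite{Li}, so there is no in-paper argument to compare against. Your outline is the standard proof and is correct in substance; I would only flag two points. First, the one genuinely nontrivial external input is the duality $(A_{\alpha}^{p}(T_B))^{*}\cong A_{\alpha}^{p'}(T_B)$ under the pairing $\langle f,g\rangle_{\alpha}=\int_{T_B}f\,\overline{g}\,dV_{\alpha}$: you need it both to represent an arbitrary bounded functional by some $g\in A_{\alpha}^{p'}$ in the reverse direction, and to convert ``the annihilator of the span of the $K_{\alpha,w}$ is trivial'' into norm density of that span. This is equivalent to the $L_{\alpha}^{p}$-boundedness of $P_{\alpha}$ for $1<p<\infty$, which the paper never states; it does follow from Lemma \ref{lem jifendengshi} via a Schur test (or by transporting the ball case through $\varPhi$), but since your whole reverse implication hangs on it, it should be cited explicitly rather than absorbed into ``standard.'' Second, in the forward direction you can bypass the reproducing-kernel representation of point evaluation entirely: Lemma \ref{lem ci tiao he fang da} already shows that $f\mapsto f(z)$ is a bounded linear functional, so weak convergence gives $f_j(z)\to 0$ pointwise, and the same estimate (with $\rho$ bounded below on a compact set) gives uniform boundedness on compacta, whence Montel--Vitali upgrades pointwise to locally uniform convergence; this avoids having to justify the reproducing formula on $A_{\alpha}^{p}$ for $p\neq 2$. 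With these two points made explicit, your write-up is a complete and correct proof.
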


With the above lemmas \ref{lem jin suan zi chong fen tiao jian} and \ref{lem ruo shou lian de deng jia tiao jian}, we can now prove the main theorem \ref{dth 2}.

\begin{theorem}\label{dth 2}
	Let $1<p_1 \le p_2 <\infty$, $-1<\alpha_1,\alpha_2,\xi<\infty$ and let $\mu\in \mathcal{M}_+$. Suppose that $$
	n+1+\xi >n\max \left( 1,\frac{1}{p_i} \right) +\frac{1+\alpha _i}{p_i}, \quad  i=1,2.
	$$
	Let
	$$
	\lambda=1+\frac{1}{p_1}-\frac{1}{p_2}, \ \gamma=\frac{1}{\lambda}\left(\xi+\frac{\alpha_1}{p_1}-\frac{\alpha_2}{p_2}\right) .
	$$
	Then the following statements are equivalent:
\\
		
		$(1)$ $B_{\mu}^{\xi}$ is compact from $A_{\alpha_{1}}^{p_{1}}(T_B)$ to $L_{\alpha_{2}}^{p_{2}}(T_B).$ 
		
		$(2)$ $T_\mu^\xi$ is compact from $A_{\alpha_1}^{p_1}(T_B)$ to $A_{\alpha_2}^{p_2}(T_B)$.
		
		$(3)$ The measure $\mu$ is a vanishing $(\lambda, \gamma)$-Carleson measure.

\end{theorem}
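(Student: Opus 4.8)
The plan is to run the cycle $(1)\Rightarrow(2)\Rightarrow(3)\Rightarrow(1)$, in close parallel with the proof of Theorem~\ref{dth 1} but with the vanishing-Carleson characterization of Theorem~\ref{xt2} replacing Theorem~\ref{xt1}. Two reductions set everything up. First, Lemma~\ref{lem jin suan zi chong fen tiao jian} (for $T_\mu^\xi$), together with its verbatim analogue for the sublinear operator $B_\mu^\xi$ — whose proof is identical, since the sublinearity inequality $|B_\mu^\xi f(z)-B_\mu^\xi g(z)|\le B_\mu^\xi(f-g)(z)$ plays the role of linearity in the Montel--Fatou argument — reduces ``compactness'' to: $\|(\cdot)f_k\|_{p_2,\alpha_2}\to 0$ for every bounded sequence $\{f_k\}\subset A_{\alpha_1}^{p_1}(T_B)$ converging to $0$ uniformly on compact subsets of $T_B$. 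Second, since $p_1>1$, Lemma~\ref{lem ruo shou lian de deng jia tiao jian} identifies such sequences with the weakly null ones in $A_{\alpha_1}^{p_1}(T_B)$. Granting this, $(1)\Rightarrow(2)$ is immediate: for $\{f_k\}$ as above, $\|T_\mu^\xi f_k\|_{p_2,\alpha_2}\le\|B_\mu^\xi f_k\|_{p_2,\alpha_2}\to 0$ because $|T_\mu^\xi f|\le B_\mu^\xi f$, and Lemma~\ref{lem jin suan zi chong fen tiao jian} then gives compactness of $T_\mu^\xi$.

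For $(2)\Rightarrow(3)$ I would use the normalized test functions
$$f_a(z)=\rho(a)^{(n+1+\xi)-(n+1+\alpha_1)/p_1}\,\rho(z,a)^{-(n+1+\xi)},\qquad a\in T_B.$$
By Lemma~\ref{lem jifendengshi} and the hypothesis $n+1+\xi>n\max(1,1/p_1)+(1+\alpha_1)/p_1$ (which forces $(n+1+\xi)p_1>n+1+\alpha_1$ and, since $p_1>1$, the exponent $(n+1+\xi)-(n+1+\alpha_1)/p_1>0$) one gets $\sup_a\|f_a\|_{p_1,\alpha_1}<\infty$. Next one checks $f_a\to 0$ uniformly on each compact $Q\subset T_B$ as $a\to\partial\widehat{T_B}$: on $Q$ one has $\rho(z)\ge c_Q>0$, and either $\rho(a)\to 0$ with $a$ bounded, in which case $|\rho(z,a)|$ stays bounded below on $Q$ while the prefactor $\rho(a)^{(n+1+\xi)-(n+1+\alpha_1)/p_1}\to 0$, or $|a|\to\infty$, in which case $|\rho(z,a)|\to\infty$ on $Q$ by Lemma~\ref{lem zitui} combined with Lemma~\ref{lem dengjia}. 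Hence $f_a\to 0$ weakly by Lemma~\ref{lem ruo shou lian de deng jia tiao jian}, and compactness of $T_\mu^\xi$ gives $\|T_\mu^\xi f_a\|_{p_2,\alpha_2}\to 0$. Finally I estimate $T_\mu^\xi f_a$ at the point $a$ two ways: using $\rho(a,w)=\overline{\rho(w,a)}$ and Lemma~\ref{lem dengjia},
$$T_\mu^\xi f_a(a)=\rho(a)^{(n+1+\xi)-(n+1+\alpha_1)/p_1}\int_{T_B}\frac{d\mu(w)}{|\rho(a,w)|^{2(n+1+\xi)}}\gtrsim \frac{\mu(D(a,r))}{\rho(a)^{(n+1+\alpha_1)/p_1+(n+1+\xi)}},$$
while Lemma~\ref{lem ci tiao he fang da} gives $T_\mu^\xi f_a(a)\le \|T_\mu^\xi f_a\|_{p_2,\alpha_2}\,\rho(a)^{-(n+1+\alpha_2)/p_2}$. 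Combining these and using the identity $\lambda(n+1+\gamma)=(n+1+\xi)+(n+1+\alpha_1)/p_1-(n+1+\alpha_2)/p_2$ yields
$$\frac{\mu(D(a,r))}{\rho(a)^{(n+1+\gamma)\lambda}}\lesssim \|T_\mu^\xi f_a\|_{p_2,\alpha_2}\longrightarrow 0\qquad(a\to\partial\widehat{T_B}),$$
so $\mu$ is a vanishing $(\lambda,\gamma)$-Carleson measure by Theorem~\ref{xt2}.

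For $(3)\Rightarrow(1)$ I would reuse the lattice estimate obtained in the proof of $(3)\Rightarrow(1)$ of Theorem~\ref{dth 1}: fixing an $r$-lattice $\{a_j\}$, any $\{f_k\}\subset A_{\alpha_1}^{p_1}(T_B)$ with $\|f_k\|_{p_1,\alpha_1}\le M$ satisfies
$$\|B_\mu^\xi f_k\|_{p_2,\alpha_2}^{p_2}\lesssim \sum_{j=1}^{\infty}\Big(\int_{D(a_j,2r)}|f_k|^{p_1}\,dV_{\alpha_1}\Big)^{p_2/p_1}\Big(\frac{\mu(D(a_j,r))}{\rho(a_j)^{(n+1+\gamma)\lambda}}\Big)^{p_2}.$$
Given $\varepsilon>0$, Theorem~\ref{xt2} provides $N_0$ with $\mu(D(a_j,r))/\rho(a_j)^{(n+1+\gamma)\lambda}<\varepsilon$ for $j\ge N_0$; for the tail, $p_2/p_1\ge 1$ and the finite-overlap property of the lattice give $\sum_{j\ge N_0}\big(\int_{D(a_j,2r)}|f_k|^{p_1}\big)^{p_2/p_1}\lesssim M^{p_2}$, so the tail is $\lesssim \varepsilon^{p_2}M^{p_2}$; for the finitely many head terms, $\int_{D(a_j,2r)}|f_k|^{p_1}\,dV_{\alpha_1}\to 0$ as $k\to\infty$ because $f_k\to 0$ uniformly on the compact set $\overline{D(a_j,2r)}$ and $V_{\alpha_1}(D(a_j,2r))<\infty$, while $\mu(D(a_j,r))/\rho(a_j)^{(n+1+\gamma)\lambda}$ is bounded since a vanishing Carleson measure is in particular Carleson (Theorem~\ref{xt1}). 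Letting $k\to\infty$ and then $\varepsilon\to 0$ gives $\|B_\mu^\xi f_k\|_{p_2,\alpha_2}\to 0$, and the $B_\mu^\xi$-analogue of Lemma~\ref{lem jin suan zi chong fen tiao jian} then yields compactness of $B_\mu^\xi$ from $A_{\alpha_1}^{p_1}(T_B)$ to $L_{\alpha_2}^{p_2}(T_B)$.

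The computational parts ($L^p_\alpha$-norm bounds, Fubini and lattice bookkeeping, the parameter identity for $\lambda(n+1+\gamma)$) are exactly as in Theorems~\ref{dth 1} and~\ref{xt2}. I expect the two genuinely delicate points to be: \emph{(i)} the sequential compactness criterion for the \emph{sublinear} operators, i.e. the $B_\mu^\xi$-version of Lemma~\ref{lem jin suan zi chong fen tiao jian}, which requires running Montel plus Fatou to produce a holomorphic limit lying in $A_{\alpha_1}^{p_1}(T_B)$ and then exploiting $|B_\mu^\xi f-B_\mu^\xi g|\le B_\mu^\xi(f-g)$ in place of linearity; and \emph{(ii)} the verification that the test functions $f_a$ tend to $0$ uniformly on compacta as $a\to\partial\widehat{T_B}$, where one must treat separately the cases $\rho(a)\to 0$ and $|a|\to\infty$ and invoke Lemma~\ref{lem zitui} for the latter.
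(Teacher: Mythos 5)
Your proposal is correct and follows essentially the same route as the paper: the cycle $(1)\Rightarrow(2)\Rightarrow(3)\Rightarrow(1)$ with the normalized kernels $f_a$ as test functions for $(2)\Rightarrow(3)$, Theorem~\ref{xt2} for the vanishing-Carleson characterization, and the head/tail lattice splitting from Theorem~\ref{dth 1} for $(3)\Rightarrow(1)$. If anything you are slightly more careful than the paper on two points it glosses over — making explicit the sublinear ($B_\mu^\xi$) analogue of Lemma~\ref{lem jin suan zi chong fen tiao jian}, and writing the test function with the factor $\rho(a)^{(n+1+\xi)-(n+1+\alpha_1)/p_1}$ (the paper's displayed $\rho(z)^{\cdots}$ is evidently a typo, since that expression is not holomorphic and the subsequent computation uses $\rho(a_k)$).
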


\begin{proof}
	
	$\left( 1 \right) \Rightarrow \left( 2 \right) 	$		
	The inequality $\left| T_{\mu}^{\gamma}f \right|\le B_{\mu}^{\gamma}f$ implies that $(2)$ holds trivially.	
	\\

	$\left( 2 \right) \Rightarrow \left( 3 \right) 	$		
	Let $T_\mu^\xi$ be a compact linear map, and consider $\left\{ a_k \right\} \in T_B$ such that $\left| a_k \right|\rightarrow \partial \widehat{T_B}$ as $k \rightarrow \infty$. 
	
	We consider the function
	$$
	f_k\left( z \right) =\frac{\rho \left( z \right) ^{\left( n+1+\xi \right) -\left( n+1+\alpha _1 \right) /p_1}}{\rho \left( z,a_k \right) ^{n+1+\xi}}.
	$$
	By Lemma \ref{lem jifendengshi}, we have $\mathop{\text{sup}}_k\lVert f_k \rVert _{p_1,\alpha _1}<\infty $. 
	
	Let $Q_k=\overline{D\left( \mathbf{i},r \right) }$ and by Lemma \ref{lem zitui}, we know that $f_k$ uniformly converges to 0 on the compact subset $ Q_k $ of $T_B$. 
	\\
	
	Since
	$$
	\begin{aligned}
	\frac{\mu \left( D\left( a_k,r \right) \right)}{\rho \left( a_k \right) ^{\left( n+1+\gamma  \right) \lambda}}
	&\lesssim \rho \left( a_k \right) ^{\left( n+1+\alpha _2 \right) /p_2}\int_{T_B}{\frac{\rho \left( a_k \right) ^{\left( n+1+\xi  \right) -\left( n+1+\alpha _1 \right) /p_1}}{\left| \rho \left( w,a_k \right) \right|^{2\left( n+1+\xi  \right)}}d\mu \left( w \right)}
	\\
	&\simeq \rho \left( a_k \right) ^{\left( n+1+\alpha _2 \right) /p_2}T_{\mu}^{\xi}f_k\left( a_k \right) ,
	\end{aligned}
	$$
	by lemma \ref{lem ci tiao he fang da} we obtain
	$$
	T_{\mu}^{\xi}f_k\left( a_k \right) \lesssim \frac{1}{\rho \left( a_k \right) ^{\left( n+1+\alpha _2 \right) /p_2}}\left( \int_{D\left( a_k,r \right)}{\left| T_{\mu}^{\xi}f_k\left( z \right) \right|^{p_2}dV_{\alpha _2}\left( z \right)} \right) ^{1/p_2}.
	$$
	\\
	
	Since $T_\mu^\xi$ is compact, Lemma \ref{lem ruo shou lian de deng jia tiao jian} implies that $f_k(z)$ weakly converges to 0. 
	
	Therefore, we obtain $ \lVert T_{\mu}^{\xi}f_k \rVert _{p_2,\alpha _2}\rightarrow 0. $	
	
	 Hence, we have $${\mu \left( D\left( a_k,r \right) \right)}/{\rho \left( a_k \right) ^{\left( n+1+\gamma  \right) \lambda}} \rightarrow 0.$$ By Theorem \ref{xt2}, $\mu$ is a vanishing $(\lambda, \gamma)$-Carleson measure.	
	\\
	
	$\left( 3 \right) \Rightarrow \left( 1 \right) 	$		
	Given the conditions, $\mu$ is a vanishing $(\lambda,\gamma)$-Carleson measure. According to Lemma \ref{lem jin suan zi chong fen tiao jian}, we only need to prove that for any bounded sequence $\left\{f_k\right\}$ in $A_{\alpha_1}^{p_1}(T_B)$ that uniformly converges to 0 on compact subsets of $T_B$, the fact $ \lVert B_{\mu}^{\xi}f_k \rVert _{p_2,\alpha _2}\rightarrow 0$ holds.	
	
	According to the proof of $(3)\Rightarrow(1)$ in Theorem \ref{dth 1}, we establish the following equation: 	
	$$
	\lVert B_{\mu}^{\xi}f_k \rVert _{p_2,\alpha _2}^{p_2}\lesssim \sum_{j=1}^{\infty}{\left( \int_{D\left( a_j,2r \right)}{\left| f_k\left( w \right) \right|^{p_1}dV_{\alpha _1}\left( w \right)} \right) ^{p_2/p_1}}\left( \frac{\mu \left( D\left( a_j,r \right) \right)}{\left| \rho \left( a_j \right) \right|^{\left( n+1+\gamma  \right) \lambda}} \right) ^{p_2}.
	$$
	
	Since $p_2/p_1\ge1$ and $\left\{f_k\right\}$ is a bounded sequence in $A_{\alpha_{1}}^{p_{1}}(T_B)$, it follows that:
	$$
	\mathop{\text{sup}}_k\sum_{j=1}^{\infty}{\left( \int_{D\left( a_j,2r \right)}{\left| f_k\left( w \right) \right|^{p_1}dV_{\alpha _1}\left( w \right)} \right) ^{p_2/p_1}}\lesssim \mathop{\text{sup}}_k\lVert f_k \rVert _{p_1,\alpha _1}^{p_2}<\infty .
	$$
	
	Let $E_n=T_B-Q_n$ and $Q_n=\overline{D\left( \mathbf{i},n \right) }$ be a compact subset in $T_B$. 
	
	Since $\left\{f_k\right\}$ converges to 0 uniformly on compact subsets of $T_B$ as $k \rightarrow \infty$, it follows that:
	$$
	\lim_{k\rightarrow \infty} \sum_{j:a_j\in Q_n}{\left( \int_{D\left( a_j,2r \right)}{\left| f_k\left( w \right) \right|^{p_1}dV_{\alpha _1}\left( w \right)} \right) ^{p_2/p_1}}=0.
	$$
	
	According to Theorem \ref{xt2}, we have 
	$$
	\lim_{|a_j|\rightarrow \partial \widehat{T_B}} \frac{\mu \left( D\left( a_j,r \right) \right)}{\rho \left( a_j \right) ^{\left( n+1+\alpha \right) \lambda}}=0.
	$$
	
	So, for any $\varepsilon>0$, there is an $n_0 \in \mathbb{N}$, such that 
	$$
	\left| \frac{\mu \left( D\left( a_j,r \right) \right)}{\rho \left( a_j \right) ^{\left( n+1+\alpha \right) \lambda}} \right|<\varepsilon
	$$ 
	for all $a_j\in E_{n_0}$. 
	\\
	
	Therefore 
	$$
	\begin{aligned}
	\lVert B_{\mu}^{\xi}f_k \rVert _{p_2,\alpha _2}^{p_2}&\le \sum_{j:j\in Q_{n_0}}{\left| \left( \int_{D\left( a_j,2r \right)}{\left| f_k\left( w \right) \right|^{p_1}dV_{\alpha _1}\left( w \right)} \right) ^{p_2/p_1}\left( \frac{\mu \left( D\left( a_j,r \right) \right)}{\left| \rho \left( a_j \right) \right|^{\left( n+1+\gamma  \right) \lambda}} \right) ^{p_2} \right|}
	\\
	&+\sum_{j:j\in E_{n_0}}{\left| \left( \int_{D\left( a_j,2r \right)}{\left| f_k\left( w \right) \right|^{p_1}dV_{\alpha _1}\left( w \right)} \right) ^{p_2/p_1}\left( \frac{\mu \left( D\left( a_j,r \right) \right)}{\left| \rho \left( a_j \right) \right|^{\left( n+1+\gamma  \right) \lambda}} \right) ^{p_2} \right|}
	\\
	&\le \mathop{\text{sup}}_{j:j\in Q_{n_0}}\left| \left( \frac{\mu \left( D\left( a_j,r \right) \right)}{\left| \rho \left( a_j \right) \right|^{\left( n+1+\gamma  \right) \lambda}} \right) ^{p_2} \right|\sum_{j:j\in Q_{n_0}}{\left| \left( \int_{D\left( a_j,2r \right)}{\left| f_k\left( w \right) \right|^{p_1}dV_{\alpha _1}\left( w \right)} \right) ^{p_2/p_1} \right|}
	\\
	&+\varepsilon \sum_{j:j\in E_{n_0}}{\left| \left( \int_{D\left( a_j,2r \right)}{\left| f_k\left( w \right) \right|^{p_1}dV_{\alpha _1}\left( w \right)} \right) ^{p_2/p_1} \right|}.
	\end{aligned}
	$$
	
	Letting $\varepsilon\rightarrow0$ and then letting $k\rightarrow\infty$, we have the above the result.
\end{proof}\\

Finally, we used a specific method to prove the last main theorem \ref{dth 3} in the present paper. The method used in the following theorem was pioneered by Luecking and utilizes the Khinchine's inequality. The detailed description is as follows.

Define the Rademacher functions $r_k$ by
$$
\begin{aligned}
& r_0(t)=\left\{\begin{array}{rl}
1 & 0 \leq t-[t]<\frac{1}{2}, \\
-1 & \frac{1}{2} \leq t-[t]<1 ;
\end{array}\right. \\
& r_n(t)=r_0\left(2^n t\right), \quad n>0 .
\end{aligned}
$$
Here $\left[ t \right] $ denotes the largest integer not greater than $t$.

Then Khinchine's inequality is the following.

For $0<p<\infty$ there exist constants $0<A_p \leq B_p<$ $\infty$ such that, for all natural numbers $m$ and all complex numbers $c_1, c_2, \ldots$, $c_m$, we have
$$
A_p\left(\sum_{j=1}^m\left|c_j\right|^2\right)^{p / 2} \leq \int_0^1\left|\sum_{j=1}^m c_j r_j(t)\right|^p d t \leq B_p\left(\sum_{j=1}^m\left|c_j\right|^2\right)^{p / 2} .
$$

Theorem \ref{dth 3} states that under the assumption of $p_2 < p_1$, the boundedness and compactness of Toeplitz operators are unified, and a connection with an integrable function is established.

\begin{theorem}\label{dth 3}
	Let $0<p_2 < p_1 <\infty$, $-1<\alpha_1,\alpha_2,\xi<\infty$ and let $\mu\in \mathcal{M}_+$. Suppose that $$
	n+1+\xi >n\max \left( 1,\frac{1}{p_i} \right) +\frac{1+\alpha _i}{p_i}, \quad  i=1,2.
	$$
	Let
	$$
	\lambda=1+\frac{1}{p_1}-\frac{1}{p_2}, \ \gamma=\frac{1}{\lambda}\left(\xi+\frac{\alpha_1}{p_1}-\frac{\alpha_2}{p_2}\right) .
	$$
	Then the following statements are equivalent:
\\
	
		$(1)$ $T_{\mu}^{\xi}$ is bounded from $A_{\alpha_{1}}^{p_{1}}(T_B)$ to $A_{\alpha_{2}}^{p_{2}}(T_B).$ 
		
		$(2)$
		$
		\left\{ \frac{\mu \left( D\left( a_k,r \right) \right)}{\rho \left( a_k \right) ^{\left( n+1+\gamma  \right) \lambda}} \right\} \in l^{1/\left( 1-\lambda \right)}$ for any $r>0$ and for any r-lattice $\left\{a_k\right\}$.
		
		$(3)$ $T_\mu^\xi$ is compact from $A_{\alpha_1}^{p_1}(T_B)$ to $A_{\alpha_2}^{p_2}(T_B)$.

\end{theorem}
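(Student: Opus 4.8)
The plan is to follow the classical Luecking scheme, establishing the cycle $(1)\Rightarrow(2)\Rightarrow(3)\Rightarrow(1)$. The implication $(3)\Rightarrow(1)$ is trivial since every compact operator between Banach spaces is bounded. For $(1)\Rightarrow(2)$, I would fix an $r$-lattice $\{a_k\}$ from Lemma \ref{lem duliang} and, for an arbitrary sequence of signs, form the function $f_t(z)=\sum_k c_k r_k(t)\,\rho(a_k)^{(n+1+\xi)-(n+1+\alpha_1)/p_1}\rho(z,a_k)^{-(n+1+\xi)}$, where the coefficients $c_k$ are to be chosen as a test sequence in the dual of $\ell^{1/(1-\lambda)}$, i.e. in $\ell^{1/\lambda}$ (note $1/(1-\lambda)$ is negative here since $\lambda<1$, so the space $\ell^{1/(1-\lambda)}$ should be read in the Luecking sense and its "dual" exponent is $p_1/p_2$-related; the correct pairing exponent is $\lambda/(\lambda-1)$... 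I would be careful here). Using Lemma \ref{lem xin 2} one controls $\|f_t\|_{p_1,\alpha_1}$ uniformly in $t$ by the $\ell^{1/\lambda}$-type norm of $\{c_k\}$. Applying the boundedness of $T_\mu^\xi$, integrating in $t$ over $[0,1]$, and invoking Khinchine's inequality to pass from $\big|\sum_k c_k r_k(t)(\cdots)\big|$ to $\big(\sum_k |c_k|^2(\cdots)\big)^{1/2}$, one obtains — after a lower bound of the form $|T_\mu^\xi f_t(z)|\gtrsim$ (local average, exactly as in the proof of $(2)\Rightarrow(3)$ of Theorem \ref{dth 1}) — an inequality
$$
\sum_k |c_k|^{p_2}\Big(\frac{\mu(D(a_k,r))}{\rho(a_k)^{(n+1+\gamma)\lambda}}\Big)^{p_2}\lesssim \|T_\mu^\xi\|^{p_2}\,\Big(\sum_k|c_k|^{\text{(dual exp)}}\Big)^{p_2/(\text{dual exp})}.
$$
By duality for sequence spaces (the converse of Hölder), this forces $\{\mu(D(a_k,r))/\rho(a_k)^{(n+1+\gamma)\lambda}\}\in \ell^{1/(1-\lambda)}$, which is $(2)$.

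For $(2)\Rightarrow(3)$, I would again start from the pointwise bound on $(B_\mu^\xi f)(z)$ derived in the proof of $(3)\Rightarrow(1)$ in Theorem \ref{dth 1}, namely
$$
|B_\mu^\xi f(z)|\lesssim \sum_j\Big(\frac{1}{\rho(a_j)^{n+1+\alpha_1}}\int_{D(a_j,2r)}|f|^{p_1}dV_{\alpha_1}\Big)^{1/p_1}\frac{\mu(D(a_j,r))}{|\rho(z,a_j)|^{n+1+\xi}},
$$
and feed it into Lemma \ref{lem xin 2} to get
$$
\|B_\mu^\xi f\|_{p_2,\alpha_2}^{p_2}\lesssim \sum_j \Big(\int_{D(a_j,2r)}|f|^{p_1}dV_{\alpha_1}\Big)^{p_2/p_1}\Big(\frac{\mu(D(a_j,r))}{\rho(a_j)^{(n+1+\gamma)\lambda}}\Big)^{p_2}.
$$
Now I would apply Hölder's inequality to the $j$-sum with exponents $p_1/p_2>1$ and its conjugate, which equals $1/(1-\lambda)\cdot(\text{something})$ — precisely, the conjugate exponent of $p_1/p_2$ is $1/(1-p_2/p_1)$, and a short computation using $\lambda=1+1/p_1-1/p_2$ identifies the relevant power of the Carleson quotients as the $\ell^{1/(1-\lambda)}$-norm. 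Splitting the lattice into the finitely many indices with $a_j\in Q_{n_0}=\overline{D(\mathbf i,n_0)}$ and the tail: on the tail the $\ell^{1/(1-\lambda)}$-mass of $\{\mu(D(a_j,r))/\rho(a_j)^{(n+1+\gamma)\lambda}\}$ is small by $(2)$ (tail of a convergent series), while on the finite part uniform convergence of $f_k$ to $0$ on the compact set $Q_{n_0}$ kills the integrals $\int_{D(a_j,2r)}|f_k|^{p_1}dV_{\alpha_1}$. This shows $\|B_\mu^\xi f_k\|_{p_2,\alpha_2}\to0$ for every bounded sequence $\{f_k\}$ converging to $0$ uniformly on compacta; combined with $|T_\mu^\xi f|\le B_\mu^\xi f$ and Lemma \ref{lem jin suan zi chong fen tiao jian}, this yields compactness of $T_\mu^\xi$, which is $(3)$.

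I expect the main obstacle to be the bookkeeping of exponents in the $(1)\Rightarrow(2)$ step: getting the Khinchine randomization to interact correctly with Lemma \ref{lem xin 2} requires choosing the test coefficients $\{c_k\}$ in exactly the right sequence space and verifying that $\sup_t\|f_t\|_{p_1,\alpha_1}$ is controlled by the correct dual norm, and one must check that the condition $n+1+\xi> n\max(1,1/p_i)+(1+\alpha_i)/p_i$ is precisely what makes Lemma \ref{lem xin 2} applicable with the relevant value of $b$. A secondary technical point is that when $0<p_2\le 1$ the function $f_t$ built from the Rademacher sum is only quasi-norm controlled, so one should either invoke the $p\le1$ branch of Lemma \ref{lem xin 2} directly or integrate $\|T_\mu^\xi f_t\|_{p_2,\alpha_2}^{p_2}$ (not the norm itself) in $t$ and apply Khinchine with exponent $p_2$; I would adopt the latter to keep everything homogeneous of degree $p_2$.
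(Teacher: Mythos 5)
Your architecture is the paper's: the cycle $(1)\Rightarrow(2)\Rightarrow(3)\Rightarrow(1)$, with Luecking's Rademacher/Khinchine randomization of the atoms $f_k(z)=\rho(a_k)^{\,n+1+\xi-(n+1+\alpha_1)/p_1}\rho(z,a_k)^{-(n+1+\xi)}$ for $(1)\Rightarrow(2)$, and for $(2)\Rightarrow(3)$ the same pointwise lattice estimate for $B_\mu^\xi f$ fed into Lemma \ref{lem xin 2}, then H\"older on the $j$-sum with exponents $p_1/p_2$ and its conjugate, the splitting of the lattice into $Q_{n_0}$ and its tail, and Lemma \ref{lem jin suan zi chong fen tiao jian} to conclude compactness; $(3)\Rightarrow(1)$ is trivial in both.

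The one place you go astray is exactly the spot you flag, the duality bookkeeping in $(1)\Rightarrow(2)$, and the fix is concrete. First, $1/(1-\lambda)$ is \emph{positive} here: $p_2<p_1$ gives $1-\lambda=1/p_2-1/p_1>0$ (you may be carrying over the sign from Theorems \ref{dth 1}--\ref{dth 2}, where $\lambda\ge 1$). Second, the test coefficients should not be sought in $\ell^{1/\lambda}$; the clean choice, and the paper's, is $\{\lambda_k\}\in\ell^{p_1}$, for which the atomic-decomposition estimate (Coifman--Rochberg, or Lemma \ref{lem xin 2} with $c_k=|\lambda_k|\,\rho(a_k)^{n+1+\xi-(n+1+\alpha_1)/p_1}$ and $b=n+1+\xi$) gives $\sup_t\|f_t\|_{p_1,\alpha_1}\lesssim\|\{\lambda_k\}\|_{\ell^{p_1}}$ — note this is the $\ell^{p_1}$ norm, not an ``$\ell^{1/\lambda}$-type'' norm. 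After Khinchine and the local lower bound $\rho(a_k)^{(n+1+\alpha_2)/p_2}\,T_\mu^\xi f_k(a_k)\simeq \mu(D(a_k,r))\rho(a_k)^{-(n+1+\gamma)\lambda}=:A_k$ one lands on $\sum_k|\lambda_k|^{p_2}A_k\lesssim\|T_\mu^\xi\|^{p_2}\bigl(\sum_k|\lambda_k|^{p_1}\bigr)^{p_2/p_1}$; since $\{|\lambda_k|^{p_2}\}$ ranges over all of $\ell^{p_1/p_2}$, it is duality of $\ell^{p_1/p_2}$ that closes the argument. (One should then check that the exponent $(p_1/p_2)'$ so obtained is reconciled, via the chosen normalization of the quotient $A_k$, with the stated $1/(1-\lambda)$ — a point worth verifying in the paper as well.) With that correction your proposal reproduces the paper's proof.
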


\begin{proof}
	$\left( 1 \right) \Rightarrow \left( 2 \right) 	$	
	Let $r_k(t)$ be a sequence of Rademacher functions and $\left\{a_k\right\}$ be an $r$-lattice on $T_B$. 
	
	Since $$n+1+\xi >n\max \left( 1,\frac{1}{p_1} \right) +\frac{1+\alpha _1}{p_1},$$ 
	it follows from \cite[Theorem 2]{coifman1980representation} that for any sequence $\left\{\lambda_k\right\} \in l^{p_1}$, \\the function
	$$
	f_t(z)=\sum_{k=1}^{\infty} \lambda_k r_k(t) \frac{\rho\left(a_k\right)^{n+1+\xi-(n+1+\alpha_1) / p_1}}{\rho\left(z, a_k\right)^{n+1+\xi}}
	$$
	is in $A_{\alpha_1}^{p_1}(T_B)$ with$\left\|f_t\right\|_{p,\alpha_1} \lesssim\left\|\left\{\lambda_k\right\}\right\|_{l^p}$ for almost every $t \in (0,1)$. 
	
	Denote
	$$
	f_k(z)=\frac{\rho\left(a_k\right)^{n+1+\xi-(n+1+\alpha_1) / p_1}}{\rho\left(z, a_k\right)^{n+1+\xi}}.
	$$
	\\
	
	Since $T_\mu^\xi$ is bounded from $A_{\alpha_{1}}^{p_{1}}(T_B)$ to $A_{\alpha_{2}}^{p_{2}}(T_B)$. 
	\\

	it follows that
	$$
	\begin{aligned}
	\lVert T_{\mu}^{\xi}f_t \rVert _{p_2,\alpha _2}^{p_2}&=\int_{T_B}{\left| \sum_{k=1}^{\infty}{\lambda _k}r_k\left( t \right) T_{\mu}^{\xi}f_k\left( z \right) \right|^{p_2}dV_{\alpha _2}\left( z \right)}
	\\
	&\lesssim \lVert T_{\mu}^{\xi} \rVert ^{p_2}\lVert f_t \rVert _{p_2,\alpha _2}^{p_2}\lesssim \lVert T_{\mu}^{\xi} \rVert ^{p_2}\left( \sum_{k=1}^{\infty}{\left| \lambda _k \right|^{p_1}} \right) ^{p_2/p_1}.
	\end{aligned}
	$$
	\\
	
	Integrating both sides with respect to $t$ from 0 to 1, then using Fubini's theorem and Khinchine's inequality, we obtain
	$$
	\int_{T_B}{\left( \sum_{k=1}^{\infty}{\left| \lambda _k \right|^2}\left| T_{\mu}^{\xi}f_k\left( z \right) \right|^2 \right) ^{p_2/2}dV_{\alpha _2}\left( z \right)}\lesssim \lVert T_{\mu}^{\xi} \rVert ^{p_2}\left( \sum_{k=1}^{\infty}{\left| \lambda _k \right|^{p_1}} \right) ^{p_2/p_1}.
	$$
	
	Through the Hölder inequality and Lemma \ref{lem duliang}, we have
	$$
	\begin{aligned}
	\sum_{k=1}^{\infty}{\left| \lambda _k \right|^{p_2}}&\int_{D\left( a_k,2r \right)}{\left| T_{\mu}^{\xi}f_k\left( z \right) \right|^{p_2}dV_{\alpha _2}\left( z \right)}\\
	&\le \max \left\{ 1,N^{1-p_2/2} \right\} \int_{T_B}{\left( \sum_{k=1}^{\infty}{\left| \lambda _k \right|^2}\left| T_{\mu}^{\xi}f_k\left( z \right) \right|^2 \right) ^{p_2/2}dV_{\alpha _2}\left( z \right)}\\
	&\lesssim \lVert T_{\mu}^{\xi} \rVert ^{p_2}\left( \sum_{k=1}^{\infty}{\left| \lambda _k \right|^{p_1}} \right) ^{p_2/p_1}.
	\end{aligned}
	$$

	In the proof of that $(2)\Rightarrow(3)$ in Theorem \ref{dth 2}, we establish the following equation: $$
	\frac{\mu \left( D\left( a_k,r \right) \right)}{\rho \left( a_k \right) ^{\left( n+1+\gamma  \right) \lambda}}\simeq \rho \left( a_k \right) ^{\left( n+1+\alpha _2 \right) /p_2}T_{\mu}^{\xi}f_k\left( a_k \right) .
	$$
	
	Based on Lemma \ref{lem ci tiao he fang da}, we obtain
	$$
	\sum_{k=1}^{\infty}{\left| \lambda _k \right|^{p_2}}\frac{\mu \left( D\left( a_k,r \right) \right)}{\rho \left( a_k \right) ^{\left( n+1+\gamma  \right) \lambda}}\lesssim \lVert T_{\mu}^{\xi} \rVert ^{p_2}\left( \sum_{k=1}^{\infty}{\left| \lambda _k \right|^{p_1}} \right) ^{p_2/p_1}.
	$$
	
	Then the duality yields that 
	$$
	\left\{ \frac{\mu \left( D\left( a_k,r \right) \right)}{\rho \left( a_k \right) ^{\left( n+1+\gamma  \right) \lambda}} \right\} \in l^{1/\left( 1-\lambda \right)}.
	$$
	\\
	$\left( 2 \right) \Rightarrow \left( 3 \right) 	$
	In order to prove that $T_{\mu}^{\xi}$ is a compact mapping, according to Lemma \ref{lem jin suan zi chong fen tiao jian}, it suffices to show that for any bounded sequence $\left\{f_k\right\}$ in $A_{\alpha_{1}}^{p_{1}}(T_B)$ that uniformly converges to 0 on any compact subset of $T_B$, the property $\lVert T_{\mu}^{\xi}f_k \rVert _{p_2,\alpha _2}\rightarrow 0
	$ holds.
	
	Similar to the proof of $(3)\Rightarrow(1)$ in Theorem \ref{dth 1}, we establish the following equation
	$$
	\lVert T_{\mu}^{\xi}f_k \rVert _{p_2,\alpha _2}^{p_2}\lesssim \sum_{j=1}^{\infty}{\left( \int_{D\left( a_j,2r \right)}{\left| f_k\left( w \right) \right|^{p_1}dV_{\alpha _1}\left( w \right)} \right) ^{p_2/p_1}}\left( \frac{\mu \left( D\left( a_j,r \right) \right)}{\left| \rho \left( a_j \right) \right|^{\left( n+1+\gamma  \right) \lambda}} \right) ^{p_2}.
	$$

	Given the known conditions, we deduce that 
	$$
	\left\{ \frac{\mu \left( D\left( a_k,r \right) \right)}{\rho \left( a_k \right) ^{\left( n+1+\gamma  \right) \lambda}} \right\} \in l^{1/\left( 1-\lambda \right)}.
	$$
	\\
	
	Denote by 
	$$A=\frac{\mu \left( D\left( a_k,r \right) \right)}{\rho \left( a_k \right) ^{\left( n+1+\gamma  \right) \lambda}},\quad B_k=\int_{D\left( a_j,2r \right)}{\left| f_k\left( w \right) \right|^{p_1}dV_{\alpha _1}\left( w \right)},\quad E_k=T_B-Q_k,$$
	Where $Q_k=\overline{D\left( \mathbf{i},k \right) }.$
	\\
	
	Since $A\in l^{1/\left( 1-\lambda \right)}$, it follows that, for any $\varepsilon>0$, there is an $k_0 \in \mathbb{N}$, such that 
	$$
	\sum_{j:a_j\in E_{k_0}}{\left| A \right|^{1/\left( 1-\lambda \right)}}<\varepsilon .
	$$
	
	Since $Q_{k_0}$ is a compact subsets of $T_B$, so we have 
	$$
	\lim_{k\rightarrow \infty} \sum_{j:j\in Q_{k_0}}{\left| B_k\right|}=0.
	$$
	
	On the other hand, since $p_1>p_2$, by Hölder inequality, we have that
	
	$$
	\begin{aligned}
	\sum_{j=1}^{\infty}{A^{p_2}B_k ^{p_2/p_1}}&\le \sum_{j:j\in Q_{k_0}}{\left| A \right|^{p_2}\left| B_k \right|^{p_2/p_1}}+\sum_{j:a_j\in E_{k_0}}{\left| A \right|^{p_2}\left| B_k \right|^{p_2/p_1}}\\
	&\le \left( \sum_{j:j\in Q_{k_0}}{\left| A \right|^{1/\left( 1-\lambda \right)}} \right) ^{1-p_2/p_1}\left( \sum_{j:j\in Q_{k_0}}{\left| B_k \right|} \right) ^{p_2/p_1}\\
	&+\left( \sum_{j:j\in E_{k_0}}{\left| A \right|^{1/\left( 1-\lambda \right)}} \right) ^{1-p_2/p_1}\left( \sum_{j:j\in E_{k_0}}{\left| B_k \right|} \right) ^{p_2/p_1}\\
	&\le \left( \sum_{j:j\in Q_{k_0}}{\left| A \right|^{1/\left( 1-\lambda \right)}} \right) ^{1-p_2/p_1}\left( \sum_{j:j\in Q_{k_0}}{\left| B_k \right|} \right) ^{p_2/p_1}\\
	&+\varepsilon ^{1-p_2/p_1}\left( \sum_{j:j\in E_{k_0}}{\left|  B_k \right|} \right) ^{p_2/p_1}.
	\end{aligned}
	$$

	Letting $\varepsilon \rightarrow0$ and then letting $k\rightarrow\infty$, we obtain $\lVert T_{\mu}^{\xi}f_k \rVert _{p_2,\alpha _2}\rightarrow 0.$
	\\
	
	$\left( 3 \right) \Rightarrow \left( 1 \right) 	$
	Since compact operators are bounded operators, so the result holds trivially.
	
\end{proof}

\end{document}